\def\disp{\displaystyle}
\def\tto{\;{\lower 1pt \hbox{$\rightarrow$}}\kern -10pt
\hbox{\raise 2pt \hbox{$\rightarrow$}}\;}
\def\Bar{\overline}
\def\ra{\rangle}
\def\la{\langle}
\def\epsilon{\varepsilon}
\def\R{\Bbb R}
\def\ox{\bar{x}}
\def\oy{\bar{y}}
\def\gph{\mbox{\rm gph}\,}
\def\epi{\mbox{\rm epi}\,}
\def\dom{\mbox{\rm dom}\,}
\def\cone{\mbox{\rm cone}}
\def\oR{\Bar{\R}}
\def\oR{\Bar{\R}}
\begin{document}

\title{On Differential Stability of a Class of Convex Optimization Problems}

\author{Nguyen~Dong~Yen \and Duong~Thi~Viet~An \and Vu~Thi~Huong \and Nguyen~Ngoc~Luan}

\institute{
	Nguyen Dong Yen, Corresponding author \at
	Institute of Mathematics, Vietnam Academy of	Science and Technology\\
	18 Hoang Quoc Viet, Hanoi 10307, Vietnam\\
	ndyen@math.ac.vn
	\and
	Duong Thi Viet An \at Department of Mathematics and Informatics, Thai Nguyen University of Sciences, Thai Nguyen, 250000, Vietnam\\
     andtv@tnus.edu.vn
     \and
     Vu Thi Huong \at
     Institute of Mathematics, Vietnam Academy of	Science and Technology\\
     18 Hoang Quoc Viet, Hanoi 10307, Vietnam\\
     vthuong@math.ac.vn
	\and
	Nguyen Ngoc Luan \at Department of Mathematics and Informatics, Hanoi National University of Education, 136 Xuan Thuy, Hanoi, Vietnam\\
	luannn@hnue.edu.vn
}

\date{Received: date / Accepted: date}

 \titlerunning{Differential Stability of a Class of Convex Optimization Problems}

 \authorrunning{N. D. Yen, D. T. V. An, V. T. Huong, N. N. Luan}

 \maketitle


\begin{abstract} The recent results of An, Luan, and Yen [Differential stability in convex optimization via generalized polyhedrality. Vietnam J. Math.  https://-doi.org/10.1007/s10013-024-00721-y] on differential stability of  parametric optimization problems described by proper generalized polyhedral convex functions and generalized polyhedral convex set-valued maps are analyzed, developed, and sharpened in this paper. Namely, keeping the Hausdorff locally convex topological vector spaces setting, we clarify the relationships between the upper estimates and lower estimates for the subdifferential and the singular subdifferential of the optimal value function. As shown by an example, the lower estimates can be strict. But, surprisingly, each upper estimate is an equality. Thus, exact formulas for the subdifferential and the singular subdifferential under consideration are obtained. In addition, it is proved that  each subdifferential upper estimate coincides with the corresponding lower estimate if either the objective function or the constraint set-valued map is polyhedral~convex.

\end{abstract}
\keywords{Parametric optimization problem in Hausdorff locally convex topological vector spaces \and Generalized polyhedral convex function \and Generalized polyhedral convex set-valued map \and The optimal value function \and Subdifferential \and  Singular subdifferential\and Sum rule}
\subclass{49J27 \and 49K40 \and 90C25 \and 90C30 \and 90C31}

\section{Introduction}

Differential stability of convex optimization problems has been studied by many authors. Among the first results, we would like to refer to Theorem~29.1 from the classical book of Rockafellar~\cite{R_1970}, where the subdifferential of the perturbation function of a convex program associated with a convex bifunction between two Euclidean spaces was computed via the set of the Kuhn-Tucker vectors. Since convex bifunctions can be considered as generalizations of set-valued maps (see~\cite[p.~292]{R_1970}), where instead of the indicator function of each value of the map one may have an extended-real-valued function, this theorem yields a formula for the subdifferential  of the optimal value function of certain convex programs under inclusion constraints.  

Note that differential stability of convex optimization problems under inclusion constraints has received much attention from researchers. A formula for computing the subdifferential of the optimal
value function of an optimization problem in Hilbert spaces, where the perturbations are canonical and the objective function is unperturbed, was obtained by Aubin~\cite[Problem~35~-~Subdifferentials of Marginal Functions, p.~335]{Aubin_1998} under a regularity condition. An analysis and extensions of the result were given by An and Yen~\cite{AnYen2015} for optimization problems in Hausdorff locally convex topological vector spaces. More comprehensive results in this direction have been established by Mordukhovich et al.~\cite[Proposition~7.1 and Theorem~7.2]{bmnrt2017} (see also Theorem~4.56 and the commentaries on p.~309 in the book by  Mordukhovich and Nam~\cite{bmn2022}).  

Various results on differential stability of convex optimization problems under inclusion constraints and their applications to optimal control of discrete or continuous dynamical systems can be found in the books by Mordukhovich and Nam~\cite[pp.~106--108]{bmn} and~\cite[pp.~286--288]{bmn2022}, and the papers ~\cite{An_Huong_Xu_2022,An_Jourani,An_Toan_2018,An_Yao_2016,AnYaoYen2020,AnYen2018,Mahmudov_2018,bmnrt2017,Toan_Thuy_2021,Thuy_Toan_2016}.

Interesting differential stability results for convex optimization problems via $\varepsilon$-subdifferential and infimal convolution of convex functions, are available  in~\cite{An_Gutierrez_2021,An_Kobis_Tuyen_2020,An_Yao_2019,Toan_2022}. 
In~\cite{LuanKimYen_2022}, some results of~\cite{AnYen2015} have been effectively used in the study of differential stability of parametric conic linear programming problems.

Recently, the problem of computing or estimating the subdifferential of the optimal value function of a parametric optimization problem described by a proper generalized polyhedral convex function and a generalized polyhedral convex set-valued map has been considered by An et al.~\cite{AnLuanYen_2023}. Using the Hausdorff locally convex topological vector spaces setting, the authors have obtained upper estimates and lower estimates for the subdifferential and the singular subdifferential of the optimal value function at a given parameter.

The interested reader is referred to the paper by Luan and Yao~\cite{LuanYao_2019} for a systematic investigation on the solution existence, optimality conditions, and duality theorems for generalized polyhedral convex optimization problems.

Our purpose in the present paper is to solve Question~3 in~\cite{AnLuanYen_2023}, which asks for examples justifying that each subdifferential estimate given there can be strict. First, we will have a closer look at the subdifferential upper and lower estimates provided in that paper. Then, we will construct a nontrivial example (see the proof of Theorem~\ref{thm_solution1} below) to show that the lower estimates can be strict. Next, we will prove that each upper estimate given  in~\cite{AnLuanYen_2023} is an equality. By doing so, we get exact formulas for the subdifferential and the singular subdifferential of the optimal value function at a given parameter. Finally, for the case where either the objective function or the constraint set-valued mapping is polyhedral convex, it will be shown that each upper estimate coincides with the corresponding lower estimate.

The organization of this paper is as follows. Basic definitions and some preliminaries are collected in Section~\ref{Sect_2}. Sections~\ref{sect_3} and~\ref{sect_4} address the above-mentioned open question from~\cite{AnLuanYen_2023}. Section~\ref{sect_5} considers differential stability under a polyhedral convexity assumption, and Section~\ref{sect_6} gives some concluding remarks. 

\section{Basic Definitions and Preliminaries}\label{Sect_2}

Let $X$ and $Y$ be real Hausdorff locally convex topological vector spaces. Denote the dual spaces of $X$ and $Y$, respectively, by $X^*$ and~$Y^*$. For $x^*\in X^*$ and $x\in X$, the value $x^*$ at $x$ is abbreviated to $\la x^*,x\ra$. Let $X^*$ and $Y^*$ be equipped with the weak$^*$ topology. Details about the latter concept can be found in~\cite[Definition~1.107 and Subsection~1.2.2]{bmn2022} and~\cite[pp.~67--68]{Rudin_1991}. For a subset $C\subset X$, by $\overline{C}$ we denote the closure of  $C$ in the topology of $X$. Similarly, given a subset  $D\subset X^*$, we let $\overline{D}$ stand for the closure of  $D$ in the weak$^*$ topology of $X^*$. The cone generated by $C$ is defined by $\cone\, C=\{tx \mid t\geq 0, x\in C\}$. The extended real line is the set $\oR:=\R\cup \{\pm \infty\}$. 

One says that a  set $C$ in $X$ is {\em convex} if $(1-t) x+tu\in C$ for all $x, u \in C$ and $t\in [0,1]$.
The {\em normal cone} to a convex set $C$ at $\ox\in C$ is defined by
\begin{equation*}\label{def_normal_cone}
N(\ox;C)=\big\{x^*\in X^*\; \big |\; \la x^*, x-\ox\ra\leq 0\ \; \mbox{\rm for all }\; x\in C\big\}.
\end{equation*} For any  $\ox\notin C$, we put $N(\ox;C)=\emptyset$. 

The \textit{indicator function} $\delta(.;C)$ of a subset $C\subset X$ is given by $\delta(x;C)=0$ for $x\in C$ and $\delta(x;C)=+\infty$ for $x\in X\setminus C$.

Given a function $f\colon X\to \oR$, we define the {\em epigraph} and the {\em effective domain} of $f$ respectively by
\begin{align*}
   & \epi f=\big\{(x, \lambda)\in X\times \R\; \big |\; f(x)\leq \lambda\big\}\ \; \mbox{\rm and }\;
   \dom f=\big\{x\in X\; \big |\; f(x)<+\infty\big\}.
\end{align*}
If $\epi f$ is a convex set in $X\times \R$, one says that $f$ is a {\em convex function}. If $\dom f\neq\emptyset$ and $f(x)\neq -\infty$ for all $x\in X$,  then $f$ is called a {\em proper function}. 

Let $f: X\rightarrow \overline{\Bbb{R}}$ be a convex function and $\bar x\in X$ be such that $f(\bar x)\in\mathbb R$.  The {\it subdifferential} of $f$ at $\bar x$ is defined by \begin{align*}\label{subdifferential_convex_analysis}\partial f(\bar x)=\big\{x^* \in X^* \mid \langle x^*, x- \bar x \rangle \le f(x)-f(\bar x), \ \forall x \in X\big\}.\end{align*} 
The {\it singular subdifferential} of $f$ at $\bar x$ is the set 
\begin{eqnarray*}\label{singular_subdff}\partial^\infty f(\bar x):=\big\{x^* \in X^* \mid (x^*,0)\in N ( (\bar x, f(\bar x)); {\rm{epi}}\, f)\big\}.\end{eqnarray*} 
We put $\partial f(\bar x)=\emptyset$ and $\partial^\infty f(\bar x)=\emptyset$ if either $\bar x\notin {\rm dom}\,f$ or  $f(\bar x)=-\infty$. 

\begin{proposition} {\rm (See~\cite[Proposition~4.2]{AnYen2015})}\label{prop4.2_An_Yen}
	If $f: X \rightarrow \overline{\Bbb{R}}$ is a convex function, then
	$$ \partial^\infty f(x)=N(x;{\rm{dom}}\, f)=\partial\delta(.;{\rm{dom}}\, f)(x)\quad\, \forall x\in X.$$ 
\end{proposition}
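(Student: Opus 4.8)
The plan is to prove the two claimed equalities, namely $\partial^\infty f(x)=N(x;\dom f)$ and $N(x;\dom f)=\partial\delta(\cdot;\dom f)(x)$, the latter being essentially immediate from the definitions. For the second equality, I would simply unwind: $x^*\in\partial\delta(\cdot;\dom f)(x)$ means $\langle x^*,u-x\rangle\le\delta(u;\dom f)-\delta(x;\dom f)$ for all $u\in X$; since we need $\delta(x;\dom f)$ finite this forces $x\in\dom f$, and then the inequality is vacuous for $u\notin\dom f$ and reads $\langle x^*,u-x\rangle\le 0$ for $u\in\dom f$, which is exactly $x^*\in N(x;\dom f)$. (If $x\notin\dom f$ both sides are $\emptyset$ by the stated conventions.) So the real content is the first equality.

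For $\partial^\infty f(x)=N(x;\dom f)$, first dispose of the trivial case: if $x\notin\dom f$ or $f(x)=-\infty$, then $\partial^\infty f(x)=\emptyset$ by convention, and also $N(x;\dom f)=\emptyset$ since either $x\notin\dom f$ (convention) or $\dom f$ is empty/degenerate — actually one must be slightly careful, but when $f$ is convex and $f(x)=-\infty$ for some $x$, properness fails and one can check $\epi f$ has no finite points over $x$; in the setting where things are consistent both sets are empty. So assume $f(x)\in\R$ (the case $\partial f(\bar x)$ is defined). I would prove two inclusions.

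For $N(x;\dom f)\subseteq\partial^\infty f(x)$: take $x^*$ with $\langle x^*,u-x\rangle\le 0$ for all $u\in\dom f$. To show $(x^*,0)\in N((x,f(x));\epi f)$, take any $(u,\lambda)\in\epi f$; then $u\in\dom f$ (since $f(u)\le\lambda<+\infty$), so $\langle x^*,u-x\rangle+0\cdot(\lambda-f(x))=\langle x^*,u-x\rangle\le 0$, which is precisely the normal-cone inequality. For the reverse inclusion $\partial^\infty f(x)\subseteq N(x;\dom f)$: take $x^*$ with $(x^*,0)\in N((x,f(x));\epi f)$, i.e. $\langle x^*,u-x\rangle\le 0$ for every $(u,\lambda)\in\epi f$. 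Given any $u\in\dom f$, the point $(u,f(u))$ lies in $\epi f$ (here $f(u)\in\R$ by properness/the convention, or at worst $f(u)=-\infty$ and one uses $(u,\lambda)$ for arbitrary $\lambda\in\R$), hence $\langle x^*,u-x\rangle\le 0$; thus $x^*\in N(x;\dom f)$.

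The only genuine subtlety — and the step I would flag as the main obstacle — is the bookkeeping around points where $f$ takes the value $-\infty$: whether $\epi f$ contains points $(u,\lambda)$ with $u\in\dom f$ but $f(u)=-\infty$ (it does, for every $\lambda$), and ensuring the conventions on emptiness of $\partial^\infty f$ and $N(\cdot;\dom f)$ are applied consistently so that the equality holds even in degenerate cases. Since the cited reference establishes this (\cite[Proposition~4.2]{AnYen2015}), I would keep the argument short, handle the $f(x)\in\R$ case as above, and remark that the degenerate cases follow directly from the definitions and conventions. No regularity or closedness hypothesis on $f$ is needed; convexity enters only to guarantee $\dom f$ is convex so that $N(x;\dom f)$ is meaningfully defined as a normal cone to a convex set.
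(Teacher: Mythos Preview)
Your proof is correct; the argument by direct verification of the two inclusions for $\partial^\infty f(x)=N(x;\dom f)$ and the unwinding of $\partial\delta(\cdot;\dom f)(x)$ are exactly what is needed. Note, however, that the paper does not supply its own proof of this proposition: it is stated with a reference to~\cite[Proposition~4.2]{AnYen2015} and used as a known preliminary fact, so there is no in-paper argument to compare against. Your write-up thus fills in what the paper takes for granted, and the content matches the standard proof one finds in the cited source.
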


It follows from Proposition~\ref{prop4.2_An_Yen} that $\partial^\infty f(\bar x)$ is a weakly$^*$-closed convex cone. Geometrically, for a point $\bar x\in X$ with $f(\bar x)\in\mathbb R$, the lager is the cone, the sharper is the set ${\rm dom}\, f$ around $\bar x$.

The {\em graph} and the {\em effective domain} of a set-valued map $F\colon X\tto Y$ are defined 
respectively by
\begin{align*}
	& \gph F=\big\{(x, y)\in X\times Y\; \big |\; y\in F(x)\big\}\ \; {\rm and}\ \; \dom F=\big\{x\in X\; \big |\; F(x)\neq\emptyset\big\}.
\end{align*}
If $\gph F$ is a convex set, then we say that $F$ is a {\em convex set-valued map}. The {\em coderivative} of a convex set-valued map $F: X \rightrightarrows Y$ at $(\ox, \oy)\in \gph F$ is defined~by
\begin{equation*}\label{def_coderivative}
	D^*F(\ox, \oy)(y^*)=\big\{x^*\in X^*\; \big |\; (x^*, -y^*)\in N\big((\ox, \oy); \gph F\big)\big\}, \ \; y^*\in Y^*.
\end{equation*} 
If $(\bar x, \bar y) \notin {\rm gph}\, F$, then we put $D^* F(\bar x, \bar y)(y^*)=\emptyset$ for any $y^* \in Y^*$.

Now, let us recall the notion of generalized polyhedral convex set from~\cite{Bonnans_Shapiro_2000}, as well as the notion of generalized polyhedral convex function and several results from~\cite{LuanYaoYen_2018}.

A subset $D\subset X$ is said to be a \textit{generalized polyhedral convex set} if there exist $x_i^*\in X^*$, $\alpha_i \in \mathbb{R},$ $ i=1,2,\dots,p$, and a closed affine subspace $L \subset X$, such~that
	\begin{align}\label{generalized_polyhedra_convex}
		D=\{x\in X \mid x\in L, \ \langle x_i^*, x \rangle \le \alpha_i , \ i=1,2,\dots,p\}.
	\end{align} 
If $D$ can be represented in the form of \eqref{generalized_polyhedra_convex} with $L=X$, then we say that it is a \textit{polyhedral convex set}.

Clearly, every generalized polyhedral convex set is a closed set. Note also that if $X$ is finite-dimensional, a subset $D\subset X$ is a generalized polyhedral convex set if and only if it is a polyhedral convex set.

A function $f:X\to\overline{\mathbb{R}}$ is called \textit{generalized polyhedral convex} (resp., \textit{polyhedral convex}) if its epigraph is a generalized polyhedral convex set (resp., a polyhedral convex set) in $X\times\mathbb R$.

Complete characterizations of a generalized polyhedral convex function (resp., a polyhedral convex function) in the form of the maximum of a finite family of continuous affine functions over a certain generalized polyhedral convex set (resp., a polyhedral convex set) are given in the following theorem.

\begin{theorem}\label{thm_gpcf}	{\rm (See~\cite[Theorem~3.2]{LuanYaoYen_2018})} Suppose that $f:X\to\overline{\mathbb{R}}$ is a proper function. Then $f$ is generalized polyhedral convex (resp., polyhedral convex) if and only if ${\rm dom}f$ is a generalized polyhedral convex set (resp., a polyhedral convex set) in $X$ and there exist $v_k^* \in X^*$, $\beta_k \in \mathbb{R}$, for $k=1,\dots,m$, such that
	\begin{equation*}\label{eq_rep_gcpf}
		f(x)=\begin{cases}
			\max \big\{ \langle v_k^*, x \rangle + \beta_k \mid k=1,\dots,m  \big\} &\text{if } x \in {\rm dom}f,\\
			+\infty & \text{if } x \notin {\rm dom}f.
		\end{cases}
	\end{equation*}	  
\end{theorem}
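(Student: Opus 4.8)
\noindent\emph{Proof plan.} Since the assertion is an equivalence, the plan is to prove the two implications separately; in each direction I would first handle the ``generalized polyhedral'' case and then obtain the ``polyhedral'' case simply by taking the relevant closed affine subspace equal to the whole space.

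For the ``if'' direction, suppose $\dom f=\{x\in L\mid\langle x_i^*,x\rangle\le\alpha_i,\ i=1,\dots,p\}$ for a closed affine subspace $L\subset X$, and that $f$ has the displayed max-representation on $\dom f$. I would simply write the epigraph down: $\epi f=\big\{(x,\lambda)\in L\times\R\ \big|\ \langle x_i^*,x\rangle\le\alpha_i\ (i=1,\dots,p),\ \langle v_k^*,x\rangle-\lambda\le-\beta_k\ (k=1,\dots,m)\big\}$. Since $L\times\R$ is a closed affine subspace of $X\times\R$ and each inequality above is given by a continuous linear functional on $X\times\R$ (namely $(x_i^*,0)$ and $(v_k^*,-1)$, under the identification $(X\times\R)^*=X^*\times\R$), this exhibits $\epi f$ as a generalized polyhedral convex set, so $f$ is generalized polyhedral convex; and if $L=X$ then $L\times\R=X\times\R$, so $\epi f$, hence $f$, is polyhedral convex.

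For the ``only if'' direction, assume $\epi f$ is a generalized polyhedral convex set in $X\times\R$, and write it as the intersection of a closed affine subspace $M\subset X\times\R$ with finitely many half-spaces $\{(x,\lambda)\mid\langle u_j^*,x\rangle+t_j\lambda\le\gamma_j\}$, $j=1,\dots,q$. The structural input I would use twice is that $\epi f$ is nonempty (because $f$ is proper) and invariant under increasing the last coordinate. First, this forces $M=L\times\R$ for a closed affine subspace $L\subset X$: if some defining equation of $M$ involved $\lambda$ nontrivially, then a point $(x_0,\lambda_0)\in\epi f\subset M$ together with $(x_0,\lambda_0+1)$, which also lies in $\epi f\subset M$, would contradict that equation. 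Second, the same invariance forces $t_j\le0$ for every $j$, since otherwise some inequality would fail for $(x_0,\lambda)$ with $\lambda$ large. I would then split $\{1,\dots,q\}=J_0\cup J_-$ according to $t_j=0$ or $t_j<0$, rescale each $j\in J_-$ inequality by $-t_j>0$ to bring it into the form $\langle v_j^*,x\rangle+\beta_j\le\lambda$, and note that $J_-\neq\emptyset$: if it were empty, every $x$ meeting the $J_0$-constraints would satisfy $(x,\lambda)\in\epi f$ for all $\lambda\in\R$, i.e.\ $f(x)=-\infty$, contradicting properness.

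At this point $\epi f=\big\{(x,\lambda)\mid x\in L,\ \langle u_j^*,x\rangle\le\gamma_j\ (j\in J_0),\ \lambda\ge\langle v_j^*,x\rangle+\beta_j\ (j\in J_-)\big\}$, and the conclusion is read off directly: projecting on $X$ gives $\dom f=\{x\in L\mid\langle u_j^*,x\rangle\le\gamma_j,\ j\in J_0\}$, a generalized polyhedral convex set, while for $x\in\dom f$ the fibre $\{\lambda\mid(x,\lambda)\in\epi f\}$ equals $[\,\max_{j\in J_-}(\langle v_j^*,x\rangle+\beta_j),\,+\infty)$, so $f(x)=\max_{j\in J_-}(\langle v_j^*,x\rangle+\beta_j)$, which is the asserted representation after relabeling $J_-$ as $\{1,\dots,m\}$. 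For the polyhedral variant, $\epi f$ being a polyhedral convex set lets one take $M=X\times\R$, i.e.\ $L=X$, whence $\dom f$ is polyhedral convex. The main obstacle I foresee is not conceptual but organizational: keeping the closed-affine-subspace factor $M$ visible throughout (one must not pretend $\epi f$ is cut out by half-spaces alone), and checking the degenerate cases --- $q=0$, $J_0=\emptyset$, or $\dom f$ reducing to an affine subspace --- so that the two uses of properness ($\dom f\neq\emptyset$ and $J_-\neq\emptyset$) are genuinely in force.
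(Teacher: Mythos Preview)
The paper does not give its own proof of this theorem; it is quoted verbatim from \cite[Theorem~3.2]{LuanYaoYen_2018} as a preliminary result, so there is nothing in the present paper to compare your argument against.

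That said, your proof plan is correct and is the natural one. The ``if'' direction is exactly the right computation of $\epi f$. In the ``only if'' direction your reasoning is sound, with one cosmetic point: when you argue that the closed affine subspace $M\subset X\times\R$ must equal $L\times\R$, the phrase ``some defining equation of $M$'' suggests $M$ is cut out by finitely many hyperplanes, which need not be the case in infinite dimensions. The clean fix is to pick $(x_0,\lambda_0)\in\epi f\subset M$, note that $N:=M-(x_0,\lambda_0)$ is a closed linear subspace, and observe that $(0,t)\in N$ for every $t\ge 0$ (by upward closedness of $\epi f$), hence for every $t\in\R$ (since $N$ is linear). Then $N=L_0\times\R$ with $L_0:=\{x\in X\mid (x,0)\in N\}$ closed, and $M=(x_0+L_0)\times\R$. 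Everything else --- the sign analysis $t_j\le 0$, the split $J_0\cup J_-$, the use of properness to force $J_-\neq\emptyset$, and the identification of $\dom f$ and of $f$ on $\dom f$ --- is correct as written.
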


The specific structure of  generalized polyhedral convex functions allows one to have the next subdifferential sum rule without any regularity assumption.

\begin{theorem}\label{thm_sum_rule} {\rm (See~\cite[Theorem~4.16]{LuanYaoYen_2018})}
	Let $f_1,f_2, \dots, f_m$ be proper generalized polyhedral convex functions on $X$. Then, for any $x \in \bigcap\limits_{i=1}^m {\rm dom}f_i$, 
	\begin{equation*}\label{eq_sum_subdifferentials_gpcf}
		\partial (f_1+f_2+\dots+f_m)(x)=\overline{\partial f_1(x)+ \partial f_2(x)+\dots+ \partial f_m(x)}.
	\end{equation*}
\end{theorem}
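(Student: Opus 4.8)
The plan is to strip off the continuous ``affine part'' of each $f_i$, apply the Moreau--Rockafellar sum rule (which costs no closure there), and reduce the whole statement to an intersection formula for normal cones of generalized polyhedral convex sets --- the real content.

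The inclusion $\supseteq$ is automatic: a sum of subgradients of the $f_i$ at $x$ is a subgradient of $f_1+\dots+f_m$ at $x$, and $\partial(f_1+\dots+f_m)(x)$ is weak$^*$-closed (being an intersection of weak$^*$-closed half-spaces in $X^*$), hence contains $\overline{\partial f_1(x)+\dots+\partial f_m(x)}$. For $\subseteq$, use Theorem~\ref{thm_gpcf} to write $f_i=g_i+\delta(\cdot;D_i)$ with $D_i:=\dom f_i$ generalized polyhedral convex and $g_i(u):=\max\{\langle v_{ik}^*,u\rangle+\beta_{ik}\mid k=1,\dots,m_i\}$ finite and continuous on all of $X$; then $\partial g_i(x)=\co\{v_{ik}^*\mid k\in K_i(x)\}$ ($K_i(x)$ the active index set) is a convex hull of finitely many functionals, hence weak$^*$-compact. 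Since each $g_i$ and each partial sum of the $g_i$ is continuous everywhere, the Moreau--Rockafellar sum rule applies with no closure and yields $\partial f_i(x)=\partial g_i(x)+N(x;D_i)$ together with
\begin{gather*}
\partial\Big(\sum_{i=1}^{m}f_i\Big)(x)=\partial\Big(\sum_{i=1}^{m}g_i\Big)(x)+N\Big(x;\bigcap_{i=1}^{m}D_i\Big)\\
=\sum_{i=1}^{m}\partial g_i(x)+N\Big(x;\bigcap_{i=1}^{m}D_i\Big).
\end{gather*}
Because $\overline{A+K}=\overline{A}+K$ for any $A\subset X^*$ and any weak$^*$-compact convex $K$ (apply this with $K=\partial g_1(x)+\dots+\partial g_m(x)$), the assertion reduces to the normal-cone intersection identity
\[
N\Big(x;\bigcap_{i=1}^{m}D_i\Big)=\overline{N(x;D_1)+\dots+N(x;D_m)}.
\]

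To prove this identity, translate so that $x=0$ and write $D_i=V_i\cap Q_i$, where $V_i$ is the (closed, containing $0$) parallel subspace of the affine subspace in the representation \eqref{generalized_polyhedra_convex} of $D_i$ and $Q_i$ is a polyhedral convex cone (only the inequalities active at $0$ matter). Three facts then suffice: (a) \emph{polyhedral rule, no closure:} $N(0;Q_1\cap\dots\cap Q_m)=N(0;Q_1)+\dots+N(0;Q_m)$, each $N(0;Q_i)$ being a finitely generated cone, and finitely generated convex cones are weak$^*$-closed since they lie in finite-dimensional subspaces of $X^*$; (b) \emph{annihilator identity, closure genuinely needed:} $\big(\bigcap_i V_i\big)^\perp=\overline{V_1^\perp+\dots+V_m^\perp}$, by the bipolar theorem; (c) \emph{mixed rule, no closure:} $N(0;V\cap Q)=V^\perp+N(0;Q)$ for any closed subspace $V$ and polyhedral cone $Q$ containing $0$ --- after restricting to $V$ this is a finitely-constrained statement on which finite-dimensional Farkas applies, the only infinite-dimensional input being that $V^\perp$ plus a finite-dimensional subspace is weak$^*$-closed. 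Combining (a)--(c) with $\bigcap_iD_i=\big(\bigcap_iV_i\big)\cap\big(\bigcap_iQ_i\big)$, and pulling the finitely generated cone $\sum_iN(0;Q_i)$ out of the closures (using $\overline{A+C}=\overline{A}+C$ for a subspace $A$ and a finitely generated cone $C$), gives $N(0;\bigcap_iD_i)=\overline{\sum_iV_i^\perp}+\sum_iN(0;Q_i)=\overline{\sum_iN(0;D_i)}$, which is the required identity.

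The step I expect to be the main obstacle is exactly this normal-cone intersection identity, and inside it the careful accounting of \emph{where} the closure is and is not needed: none for the polyhedral ``inequality directions'' (finitely generated cones are weak$^*$-closed), but an unavoidable closure for the ``subspace directions'', since $V_1^\perp+\dots+V_m^\perp$ --- equivalently $N(x;D_1)+\dots+N(x;D_m)$ --- need not be weak$^*$-closed once the $V_i$ are infinite-dimensional. This is precisely why the closure bar cannot be dropped in general, and also why it \emph{can} be dropped when every $D_i$ is polyhedral convex (then each $V_i=X$ and all normal cones in sight are finitely generated). The rest --- the splitting $f_i=g_i+\delta(\cdot;D_i)$, Moreau--Rockafellar for everywhere-continuous convex functions, and the elementary set-algebra identities $\overline{A+K}=\overline{A}+K$ and $\overline{A+C}=\overline{A}+C$ --- is routine.
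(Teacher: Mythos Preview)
The paper does not prove this theorem; it is quoted verbatim from \cite[Theorem~4.16]{LuanYaoYen_2018} and used as a black box, so there is no in-paper argument to compare your attempt against. That said, your proof sketch is essentially correct and is the natural route (and almost certainly close to the one in \cite{LuanYaoYen_2018}): peel off the continuous max-of-affines part $g_i$ via Theorem~\ref{thm_gpcf}, apply Moreau--Rockafellar where one summand is everywhere continuous, and reduce everything to the normal-cone intersection formula $N(x;\bigcap_i D_i)=\overline{\sum_i N(x;D_i)}$ for generalized polyhedral convex sets $D_i$. Your decomposition $D_i=V_i\cap Q_i$ and the three ingredients (a)--(c) are exactly the right way to isolate where the closure is forced (the annihilator sum $\sum_i V_i^\perp$) and where it is not (the finitely generated cones from the inequality constraints).

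Two places would benefit from one extra line when you write this out in full. First, the identity $\overline{A+C}=\overline{A}+C$ for a linear subspace $A\subset X^*$ and a finitely generated cone $C$: the nontrivial inclusion holds because $\overline{A}+C$ is already weak$^*$-closed --- pass to the finite-dimensional quotient $(\overline{A}+\mathrm{span}\,C)/\overline{A}$, where the image of $C$ is a closed polyhedral cone, and pull back. Second, replacing each $D_i$ by the cone $V_i\cap Q_i$ is legitimate for the normal-cone computation at $0$, but you are also using the analogous replacement for the intersection, namely $N\big(0;\bigcap_i D_i\big)=N\big(0;(\bigcap_i V_i)\cap(\bigcap_i Q_i)\big)$; this follows by the same ``scale down to re-enter the inactive constraints'' argument applied to $\bigcap_i D_i$, and is worth stating explicitly. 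Finally, in (c) the phrase ``finite-dimensional Farkas'' is slightly misleading: $V$ need not be finite-dimensional, but Farkas with finitely many linear inequalities is valid in any Hausdorff locally convex space because a finitely generated cone in $V^*$ is weak$^*$-closed, which is all the separation step needs.
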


\begin{theorem}\label{thm_sum_rule_2} {\rm (See~\cite[Theorem~4.17]{LuanYaoYen_2018})}
	Suppose $f_1$ is a proper polyhedral convex function on $X$ and $f_2$ is a proper generalized polyhedral convex function on $X$. Then, for any $x \in \big({\rm dom}f_1\big)\cap \big({\rm dom}f_2\big)$, 
	\begin{equation*}\label{eq_sum_subdifferentials_gpcf}
		\partial (f_1+f_2)(x)=\partial f_1(x)+ \partial f_2(x).
	\end{equation*}
\end{theorem}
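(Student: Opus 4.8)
The plan is to prove the two inclusions in the claimed identity separately. The inclusion $\partial f_1(x)+\partial f_2(x)\subseteq\partial(f_1+f_2)(x)$ is elementary and holds for arbitrary proper convex functions: if $x_1^*\in\partial f_1(x)$ and $x_2^*\in\partial f_2(x)$, then, since $x\in(\dom f_1)\cap(\dom f_2)=\dom(f_1+f_2)$ so that $f_1(x),f_2(x)\in\R$, adding the two subgradient inequalities at $x$ gives $\langle x_1^*+x_2^*,u-x\rangle\le(f_1+f_2)(u)-(f_1+f_2)(x)$ for every $u\in X$, i.e. $x_1^*+x_2^*\in\partial(f_1+f_2)(x)$. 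No polyhedrality is used in this direction.

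For the reverse inclusion, I would remove the closure in Theorem~\ref{thm_sum_rule} by a polyhedrality argument. Since every polyhedral convex function is generalized polyhedral convex, Theorem~\ref{thm_sum_rule} (applied with $m=2$) yields $\partial(f_1+f_2)(x)=\overline{\partial f_1(x)+\partial f_2(x)}$, so it suffices to show that the set $S:=\partial f_1(x)+\partial f_2(x)$ is already weakly$^*$ closed in $X^*$; then $S=\partial(f_1+f_2)(x)$ and, together with the first inclusion, the proof is complete. To establish the weak$^*$ closedness of $S$, I would determine the structure of the two summands via Theorem~\ref{thm_gpcf}. Writing $f_1$ as the maximum of finitely many continuous affine functions over the polyhedral convex set $\dom f_1$ and combining the classical subdifferential formula for a finite maximum of continuous affine functions, the Moreau--Rockafellar sum rule (that maximum being continuous on $X$), and the Farkas-type description of the normal cone to a polyhedral convex set, one gets that $\partial f_1(x)$ is a polytope plus a finitely generated cone in $X^*$, hence a polyhedral convex set. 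An analogous computation, now keeping track of the closed affine subspace containing $\dom f_2$, shows that $\partial f_2(x)$ is a polytope plus a finitely generated cone plus the weakly$^*$ closed annihilator of the subspace parallel to that affine subspace, hence a generalized polyhedral convex set in $X^*$. Consequently $S$ is the Minkowski sum of a polyhedral convex set and a generalized polyhedral convex set; since the class of generalized polyhedral convex sets is stable under finite Minkowski sums and its members are closed, $S$ is generalized polyhedral convex and, in particular, weakly$^*$ closed.

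The main obstacle is this last step: one must verify that $\partial f_1(x)$ and $\partial f_2(x)$ genuinely inherit the (generalized) polyhedral structure of $f_1$ and $f_2$ inside the dual space $X^*$ equipped with the weak$^*$ topology, and that adding a polyhedral convex set to a generalized polyhedral convex set stays within that class, so that weak$^*$ closedness is automatic and the closure bar in Theorem~\ref{thm_sum_rule} disappears. The place where ordinary polyhedrality of $f_1$ is really used is precisely here: it forces the ``subspace part'' of $\partial f_1(x)$ to be trivial, so that $S$ involves only one annihilator subspace; if $f_1$ were merely generalized polyhedral convex, $S$ would carry a sum of two closed subspaces, which need not be closed in an infinite-dimensional setting---this is exactly why the closure cannot be dropped in Theorem~\ref{thm_sum_rule} in general.
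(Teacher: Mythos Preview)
The paper does not prove Theorem~\ref{thm_sum_rule_2}; it is simply quoted from~\cite[Theorem~4.17]{LuanYaoYen_2018}, so there is no proof here to compare your proposal against. That said, your outline is the natural one and is essentially the strategy of the cited reference: obtain the equality with a closure from Theorem~\ref{thm_sum_rule}, then remove the closure by showing that $S=\partial f_1(x)+\partial f_2(x)$ is already weak$^*$ closed, exploiting the structural asymmetry between the two summands. Your diagnosis of \emph{why} polyhedrality of $f_1$ matters---only one annihilator subspace appears in $S$---is exactly the point.

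Two statements in your write-up need correction, though neither breaks the argument. First, ``the class of generalized polyhedral convex sets is stable under finite Minkowski sums'' is false as stated: the sum of two closed linear subspaces (each trivially generalized polyhedral convex) need not be closed, which is precisely the obstruction you yourself invoke in your last paragraph. The correct and sufficient claim is that the sum of a \emph{finitely generated} convex set with a generalized polyhedral convex set is again generalized polyhedral convex; this is what your computation actually produces, since $\partial f_1(x)=\mathrm{conv}\{v_k^*:k\ \text{active}\}+\mathrm{cone}\{x_i^*:i\ \text{active}\}$. Second, in an infinite-dimensional $X^*$ that set is \emph{not} polyhedral convex in the sense of the paper's definition (even a singleton fails, because any finite intersection of weak$^*$ closed half-spaces has lineality space of finite codimension); it is generalized polyhedral convex with trivial subspace part. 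This is only a terminological slip. One further point to watch: the splitting $N\big((\bar x,\bar y);L\cap P\big)=L_0^\perp+N\big((\bar x,\bar y);P\big)$ that you use to describe $\partial f_2(x)$ is itself a nontrivial fact in this generality (established independently in~\cite{LuanYaoYen_2018}); make sure you are not invoking Theorem~\ref{thm_sum_rule_2} to justify it.
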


Given a set-valued map $G: X \rightrightarrows Y$ and a function $\varphi: X \times Y \rightarrow \overline{\Bbb{R}}$, we consider the optimization problem depending on a parameter $x$:
\begin{align}\label{math_program}
	\tag{$P_x$} \quad \quad \quad \min\{\varphi(x,y)\mid y \in G(x)\}.
\end{align} The \textit{optimal value function}
of~\eqref{math_program} is the function $\mu: X \rightarrow \overline{\Bbb{R}}$ with
\begin{align}\label{marginalfunction}
	\mu(x):= \inf \left\{\varphi (x,y)\mid y \in G(x)\right\}.
\end{align}
By the convention $\inf \emptyset =+\infty$, we have $\mu(x)=+\infty$ for any $x \notin {\rm{dom}}\, G.$ The \textit{solution map}  $M:X\rightrightarrows Y $ of the problem~\eqref{math_program} is defined by setting \begin{align}\label{solution_map}
	M(x)=\{y \in G(x)\mid \mu(x)= \varphi (x,y)\},\quad \forall x\in X.
\end{align}

If $G$ is a convex set-valued map and if $\varphi$ is a convex function, then~\eqref{math_program} is called a {\it convex optimization problem}. 

\medskip
When the set-valued map $G$ and the function $\varphi$ are both generalized polyhedral convex,~\eqref{math_program} is a \textit{parametric generalized  polyhedral convex optimization problem} in the sense of Luan and Yao~\cite[p.~792]{LuanYao_2019}. In that case, $\mu$ is a convex function. By~\cite[Theorem~4.7]{LuanNamYen_2023}, we know that the function $\mu$ is generalized polyhedral convex
if and only if it is lower semicontinuous on $X$.

\medskip
The following inner and upper estimates for the subdifferential of $\mu$ at a given parameter point $\bar x$ have been obtained recently.

\begin{theorem}\label{thm_subdiff} {\rm (See~\cite[Theorem~3.1]{AnLuanYen_2023})} Let $\varphi: X \times Y \rightarrow \overline{\Bbb{R}}$ be a proper generalized polyhedral convex function and $G: X \rightrightarrows Y$ a generalized polyhedral convex set-valued map. Then for any $\bar x\in X$ with $\mu(\bar x)\in\mathbb R$, and for any $\bar y \in M(\bar x)$, one has
	\begin{equation}\label{subdiff_outer}
		\partial\mu(\bar x) \subset {\rm pr}_{X^*}\left[\overline{\partial  \varphi(\bar x, \bar y) + N( (\bar x, \bar y); {\rm {gph}}\,G)}\cap \big(X^*\times\{0\}\big)\right],
	\end{equation} where ${\rm pr}_{X^*}: X^*\times Y^*\to X^*,\ {\rm pr}_{X^*}(x^*,y^*):=x^*,$ is the natural projection from $X^*\times Y^*$ to $X^*$, and
	\begin{equation}\label{subdiff_inner}
		\partial \mu(\bar x) \supset \overline{\bigcup\limits_{(x^*,y^*) \in \partial \varphi(\bar x, \bar y)}   \big\{x^* + D^*G( \bar x, \bar y)(y^*) \big\}}.
	\end{equation}
\end{theorem}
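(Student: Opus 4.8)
The plan is to reduce the problem to the qualification-free subdifferential sum rule for generalized polyhedral convex functions by writing $\mu$ as an infimal projection. Set $\psi:=\varphi+\delta(\cdot\,;\gph G)\colon X\times Y\to\oR$, so that $\mu(x)=\inf_{y\in Y}\psi(x,y)$ for every $x\in X$. Since $\bar y\in M(\bar x)$ and $\mu(\bar x)\in\R$, the point $(\bar x,\bar y)$ belongs to $\dom\varphi\cap\gph G$ and $\psi(\bar x,\bar y)=\varphi(\bar x,\bar y)=\mu(\bar x)\in\R$. The indicator function $\delta(\cdot\,;\gph G)$ is a proper generalized polyhedral convex function on $X\times Y$: its epigraph is $(\gph G)\times[0,+\infty)$, which is a generalized polyhedral convex subset of $(X\times Y)\times\R$ precisely because $\gph G$ is such a subset of $X\times Y$. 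Thus $\varphi$ and $\delta(\cdot\,;\gph G)$ are two proper generalized polyhedral convex functions whose effective domains share the point $(\bar x,\bar y)$, and $\partial\psi(\bar x,\bar y)$ is well defined.

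The first step I would isolate is an elementary \emph{marginal function identity}: for every $\bar y\in M(\bar x)$,
\begin{equation*}
\partial\mu(\bar x)=\big\{x^*\in X^*\mid (x^*,0)\in\partial\psi(\bar x,\bar y)\big\}={\rm pr}_{X^*}\big[\partial\psi(\bar x,\bar y)\cap(X^*\times\{0\})\big].
\end{equation*}
For ``$\supset$'', if $(x^*,0)\in\partial\psi(\bar x,\bar y)$ then $\langle x^*,x-\bar x\rangle\le\psi(x,y)-\psi(\bar x,\bar y)$ for all $(x,y)\in X\times Y$; fixing $x$ and passing to the infimum over $y\in Y$ gives $\langle x^*,x-\bar x\rangle\le\mu(x)-\mu(\bar x)$, i.e. $x^*\in\partial\mu(\bar x)$. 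For ``$\subset$'', if $x^*\in\partial\mu(\bar x)$ then for every $(x,y)$ one has $\langle x^*,x-\bar x\rangle\le\mu(x)-\mu(\bar x)\le\psi(x,y)-\psi(\bar x,\bar y)$, using $\mu(x)\le\psi(x,y)$ together with $\mu(\bar x)=\psi(\bar x,\bar y)$; hence $(x^*,0)\in\partial\psi(\bar x,\bar y)$. Only the definition of the convex subdifferential is used here, and no regularity or attainment beyond $\bar y\in M(\bar x)$ is needed.

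The second step is to evaluate $\partial\psi(\bar x,\bar y)$ through Theorem~\ref{thm_sum_rule}, applied to the two generalized polyhedral convex functions $\varphi$ and $\delta(\cdot\,;\gph G)$ at $(\bar x,\bar y)$. Combined with $\partial\delta(\cdot\,;\gph G)(\bar x,\bar y)=N((\bar x,\bar y);\gph G)$ (cf. Proposition~\ref{prop4.2_An_Yen}), this yields
\begin{equation*}
\partial\psi(\bar x,\bar y)=\overline{\partial\varphi(\bar x,\bar y)+N\big((\bar x,\bar y);\gph G\big)}.
\end{equation*}
Substituting into the marginal function identity produces $\partial\mu(\bar x)={\rm pr}_{X^*}\big[\overline{\partial\varphi(\bar x,\bar y)+N((\bar x,\bar y);\gph G)}\cap(X^*\times\{0\})\big]$, which is exactly~\eqref{subdiff_outer} (and, as this route already shows, with equality rather than mere inclusion).

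For the inner estimate~\eqref{subdiff_inner} the sum rule is not even needed: only the trivial inclusion $\partial\varphi(\bar x,\bar y)+N((\bar x,\bar y);\gph G)\subset\partial\psi(\bar x,\bar y)$ is used. Indeed, take $(x^*,y^*)\in\partial\varphi(\bar x,\bar y)$ and $u^*\in D^*G(\bar x,\bar y)(y^*)$, that is, $(u^*,-y^*)\in N((\bar x,\bar y);\gph G)$; then $(x^*+u^*,0)=(x^*,y^*)+(u^*,-y^*)\in\partial\psi(\bar x,\bar y)$, so the marginal function identity gives $x^*+u^*\in\partial\mu(\bar x)$. Hence $\bigcup_{(x^*,y^*)\in\partial\varphi(\bar x,\bar y)}\{x^*+D^*G(\bar x,\bar y)(y^*)\}\subset\partial\mu(\bar x)$, and since $\partial\mu(\bar x)$ is an intersection of weak$^*$-closed half-spaces, hence weak$^*$-closed, taking the weak$^*$-closure of the left-hand side yields~\eqref{subdiff_inner}. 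Modulo the machinery already available, the only point that genuinely needs attention is verifying the hypotheses of Theorem~\ref{thm_sum_rule} — namely that $\delta(\cdot\,;\gph G)$ is generalized polyhedral convex and that its domain meets $\dom\varphi$ — since that qualification-free sum rule is the real engine of the argument; everything else is bookkeeping.
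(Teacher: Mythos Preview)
Your proof is correct. The paper itself does not prove Theorem~\ref{thm_subdiff} --- it is quoted from~\cite{AnLuanYen_2023} --- but your argument coincides with the paper's own proof of the sharper Theorem~\ref{new-thm}: both rewrite $\mu$ as the infimal projection of $\psi=\varphi+\delta(\cdot\,;\gph G)$, invoke the qualification-free sum rule of Theorem~\ref{thm_sum_rule} to obtain $\partial\psi(\bar x,\bar y)=\overline{\partial\varphi(\bar x,\bar y)+N((\bar x,\bar y);\gph G)}$, and then pass through the elementary marginal-function identity $\partial\mu(\bar x)=\{x^*\mid (x^*,0)\in\partial\psi(\bar x,\bar y)\}$. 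In fact, by isolating that identity as a two-sided equality from the outset, you have already observed that~\eqref{subdiff_outer} holds with equality, which is precisely the content of Theorem~\ref{new-thm}; the paper presents the two inclusions of Theorem~\ref{thm_subdiff} as an imported result and only afterwards upgrades the outer estimate to equality, whereas your route gets there in one stroke.
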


\medskip
 For the singular subdifferential of $\mu$  at a given parameter point $\bar x$, the next inner and upper estimates are available.

	\begin{theorem}\label{thm_sing} {\rm (See~\cite[Theorem~3.2]{AnLuanYen_2023})} Under the assumptions of Theorem~\ref{thm_subdiff}, for any $\bar x \in {\rm dom}\, \mu$, with $\mu(\bar x)\neq -\infty$, and for any $\bar y \in M(\bar x)$, it holds that
	\begin{equation}\label{sing_outer}
		\partial^\infty\mu(\bar x) \subset {\rm pr}_{X^*}\left[\overline{\partial^\infty \varphi(\bar x, \bar y) + N( (\bar x, \bar y); {\rm {gph}}\,G)}\cap \big(X^*\times\{0\}\big)\right]
	\end{equation} and
	\begin{equation}\label{sing_inner}
		\partial^\infty \mu(\bar x) \supset \overline{\bigcup\limits_{(x^*,y^*) \in \partial^\infty \varphi(\bar x, \bar y)}   \big\{x^* + D^*G( \bar x, \bar y)(y^*) \big\}}.
	\end{equation}
\end{theorem}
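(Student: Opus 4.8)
The plan is to deduce both estimates \eqref{sing_outer} and \eqref{sing_inner} from Theorem~\ref{thm_subdiff} by passing to a suitable auxiliary parametric problem, exploiting the description of the singular subdifferential as a normal cone to the effective domain given by Proposition~\ref{prop4.2_An_Yen}. Concretely, set $\widetilde\varphi:=\delta(\cdot;\dom\varphi)$ on $X\times Y$ and consider the parametric problem $\min\{\widetilde\varphi(x,y)\mid y\in G(x)\}$, whose optimal value function and solution map I denote by $\widetilde\mu$ and $\widetilde M$, respectively.

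The first step is to check that this auxiliary problem is again a parametric generalized polyhedral convex optimization problem. Since $\varphi$ is a proper generalized polyhedral convex function, $\dom\varphi$ is a nonempty generalized polyhedral convex set in $X\times Y$ by Theorem~\ref{thm_gpcf}; hence $\epi\widetilde\varphi=\dom\varphi\times[0,+\infty)$ is a generalized polyhedral convex set, i.e.\ $\widetilde\varphi$ is a proper generalized polyhedral convex function, and $G$ is unchanged. Next, because $\widetilde\varphi\ge 0$ and $\widetilde\varphi(x,y)=0$ precisely when $(x,y)\in\dom\varphi$, one gets $\widetilde\mu(x)=0$ if $x\in\proj_X(\dom\varphi\cap\gph G)$ and $\widetilde\mu(x)=+\infty$ otherwise. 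Since $\dom\mu=\{x\mid\exists\,y\in G(x),\ \varphi(x,y)<+\infty\}=\proj_X(\dom\varphi\cap\gph G)$, this means $\widetilde\mu=\delta(\cdot;\dom\mu)$, so Proposition~\ref{prop4.2_An_Yen} yields $\partial\widetilde\mu(\bar x)=N(\bar x;\dom\mu)=\partial^\infty\mu(\bar x)$ and, applied to $\varphi$, also $\partial\widetilde\varphi(\bar x,\bar y)=N((\bar x,\bar y);\dom\varphi)=\partial^\infty\varphi(\bar x,\bar y)$.

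It remains to verify the hypotheses of Theorem~\ref{thm_subdiff} for the auxiliary data at $(\bar x,\bar y)$. One has $\widetilde\mu(\bar x)=0\in\mathbb R$; and since $\bar y\in M(\bar x)$ forces $(\bar x,\bar y)\in\gph G$ and $\varphi(\bar x,\bar y)=\mu(\bar x)\in\mathbb R$ (by the standing hypothesis $\bar x\in\dom\mu$, $\mu(\bar x)\neq-\infty$), we obtain $(\bar x,\bar y)\in\dom\varphi$, hence $\widetilde\varphi(\bar x,\bar y)=0=\widetilde\mu(\bar x)$, i.e.\ $\bar y\in\widetilde M(\bar x)$. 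Applying Theorem~\ref{thm_subdiff} to $\widetilde\varphi$ and $G$ then gives the inclusions \eqref{subdiff_outer} and \eqref{subdiff_inner} with $(\mu,\varphi)$ replaced by $(\widetilde\mu,\widetilde\varphi)$, and substituting the two identities $\partial\widetilde\mu(\bar x)=\partial^\infty\mu(\bar x)$ and $\partial\widetilde\varphi(\bar x,\bar y)=\partial^\infty\varphi(\bar x,\bar y)$ produces exactly \eqref{sing_outer} and \eqref{sing_inner}.

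I expect the bookkeeping to be straightforward; the one point worth dwelling on is the very first step, namely confirming that replacing $\varphi$ by $\delta(\cdot;\dom\varphi)$ keeps us within the class of generalized polyhedral convex data so that Theorem~\ref{thm_subdiff} is legitimately applicable — this rests on the characterization in Theorem~\ref{thm_gpcf} that the effective domain of a proper generalized polyhedral convex function is itself a generalized polyhedral convex set. Note also that no closedness hypothesis on $\dom\mu$ is needed, since $\dom\varphi\cap\gph G$ is convex and the normal-cone identities above are valid for convex (not necessarily closed) sets; this is the same reduction device customarily used to derive singular-subdifferential rules from subdifferential rules.
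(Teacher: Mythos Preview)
Your reduction is correct: replacing $\varphi$ by $\widetilde\varphi=\delta(\cdot;\dom\varphi)$ keeps the data proper generalized polyhedral convex (via Theorem~\ref{thm_gpcf}), the auxiliary optimal value function is exactly $\delta(\cdot;\dom\mu)$, and Proposition~\ref{prop4.2_An_Yen} then converts the two estimates of Theorem~\ref{thm_subdiff} for $(\widetilde\mu,\widetilde\varphi)$ into \eqref{sing_outer} and \eqref{sing_inner}. The verification that $\bar y\in\widetilde M(\bar x)$ is also fine.

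As for comparison: the present paper does \emph{not} supply a proof of Theorem~\ref{thm_sing}; it is quoted verbatim from \cite[Theorem~3.2]{AnLuanYen_2023} as a preliminary. So there is no ``paper's own proof'' to set your argument against. That said, the indicator-function device you use---trading $\varphi$ for $\delta(\cdot;\dom\varphi)$ and invoking $\partial^\infty\varphi=\partial\delta(\cdot;\dom\varphi)$ via Proposition~\ref{prop4.2_An_Yen}---is exactly the manoeuvre the paper employs later when it strengthens \eqref{sing_outer} to the equality \eqref{subdiff_equality2} in Theorem~\ref{new-thm}, and again in the singular part of Section~\ref{sect_5}. In that sense your approach is fully aligned with the paper's methodology; it simply packages the singular case as a corollary of the non-singular one rather than rerunning the argument from scratch.
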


By constructing suitable examples, it has been shown in~\cite[Section~4]{AnLuanYen_2023} that all the estimates given in Theorems~\ref{thm_subdiff} and~\ref{thm_sing} are sharp, i.e., each of them can hold as an equality. In that context, the following open question arises naturally (see~\cite[Question~3]{AnLuanYen_2023}). 

\smallskip
\textbf{Question~1.} \textit{Are there some examples showing that each estimate provided by Theorems~\ref{thm_subdiff} and~\ref{thm_sing} can be strict?}

\medskip
In the next two sections, we will provide a solution to this question and, moreover, study the estimates~\eqref{subdiff_outer}--\eqref{sing_inner} in detail. To do so, we denote by $A$ (resp., $B$) the set on the right-hand side of the inclusion~\eqref{subdiff_inner} (resp.,  of the inclusion~\eqref{subdiff_outer}). That is,
\begin{equation}\label{A}
	A=\overline{\bigcup\limits_{(x^*,y^*) \in \partial \varphi(\bar x, \bar y)}   \big\{x^* + D^*G( \bar x, \bar y)(y^*) \big\}}
\end{equation}
and 
\begin{equation}\label{B}
	B={\rm pr}_{X^*}\left[\overline{\partial\varphi(\bar x, \bar y) + N( (\bar x, \bar y); {\rm {gph}}\,G)}\cap \big(X^*\times\{0\}\big)\right].
\end{equation}
Similarly, denote by $A^\infty$ (resp., $B^\infty$) the set on the right-hand side of the inclusion~\eqref{sing_inner} (resp.,  of the inclusion~\eqref{sing_outer}). That is,
\begin{equation}\label{A-infty}
	A^\infty=\overline{\bigcup\limits_{(x^*,y^*) \in \partial^\infty \varphi(\bar x, \bar y)}   \big\{x^* + D^*G( \bar x, \bar y)(y^*) \big\}}
\end{equation}
and 
\begin{equation}\label{B-infty}
	B^\infty={\rm pr}_{X^*}\left[\overline{\partial^\infty \varphi(\bar x, \bar y) + N( (\bar x, \bar y); {\rm {gph}}\,G)}\cap \big(X^*\times\{0\}\big)\right].
\end{equation} With these notations, the estimates~\eqref{subdiff_outer} and~\eqref{subdiff_inner} can be rewritten as
\begin{equation}\label{estimates_1}
	A\subset \partial\mu(\bar x) \subset B,
\end{equation} while the estimates~\eqref{sing_outer} and~\eqref{sing_inner} read 
\begin{equation}\label{estimates_2}
	A^\infty\subset \partial^\infty\mu(\bar x) \subset B^\infty.
\end{equation}

\section{Solution to Question~1}\label{sect_3}

Let $A, B, A^\infty, B^\infty$ be defined as in~\eqref{A}--\eqref{B-infty}. Consider the following subsets of $A, B, A^\infty,$ and $B^\infty$:
\begin{equation}\label{A_0}
	A_0=\bigcup\limits_{(x^*,y^*) \in \partial \varphi(\bar x, \bar y)}   \big\{x^* + D^*G( \bar x, \bar y)(y^*) \big\},
\end{equation}
\begin{equation}\label{B_0}
	B_0={\rm pr}_{X^*}\left[\partial\varphi(\bar x, \bar y) + N( (\bar x, \bar y); {\rm {gph}}\,G)\cap \big(X^*\times\{0\}\big)\right],
\end{equation}
\begin{equation}\label{A-infty_0}
	A^\infty_0=\bigcup\limits_{(x^*,y^*)\in \partial^\infty \varphi(\bar x, \bar y)}   \big\{x^* + D^*G( \bar x, \bar y)(y^*) \big\},
\end{equation}
and 
\begin{equation}\label{B-infty_0}
	B^\infty_0={\rm pr}_{X^*}\left[\partial^\infty \varphi(\bar x, \bar y) + N( (\bar x, \bar y); {\rm {gph}}\,G)\cap \big(X^*\times\{0\}\big)\right].
\end{equation}
\begin{proposition}\label{prop_equalities} The equalities $B_0=A_0$ and $B^\infty_0=A^\infty_0$ hold.
\end{proposition}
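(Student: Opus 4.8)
The plan is to establish the two claimed equalities by unpacking the definitions of the coderivative $D^*G$, the normal cone to $\gph G$, and the projection $\mathrm{pr}_{X^*}$, and showing that both sides of each equality consist of exactly the same collection of functionals. I would treat the two equalities in parallel, since the argument for $B^\infty_0 = A^\infty_0$ is word-for-word the same as the one for $B_0 = A_0$ with $\partial\varphi(\ox,\oy)$ replaced by $\partial^\infty\varphi(\ox,\oy)$; so it suffices to carry out the first one carefully and remark that the second follows mutatis mutandis.

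First I would rewrite $B_0$ using the elementary identity that, for a set $S \subset X^* \times Y^*$, one has $\mathrm{pr}_{X^*}\big[S \cap (X^*\times\{0\})\big] = \{x^* \in X^* \mid (x^*,0) \in S\}$. Applying this with $S = \partial\varphi(\ox,\oy) + N((\ox,\oy);\gph G)$ gives
\begin{equation*}
	B_0 = \big\{ x^* \in X^* \ \big|\ (x^*,0) \in \partial\varphi(\ox,\oy) + N((\ox,\oy);\gph G) \big\}.
\end{equation*}
A point $x^*$ lies in this set precisely when there exist $(x_1^*,y_1^*) \in \partial\varphi(\ox,\oy)$ and $(x_2^*,y_2^*) \in N((\ox,\oy);\gph G)$ with $x_1^* + x_2^* = x^*$ and $y_1^* + y_2^* = 0$, i.e. $y_2^* = -y_1^*$. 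By the definition of the coderivative, the condition $(x_2^*, -y_1^*) \in N((\ox,\oy);\gph G)$ is exactly $x_2^* \in D^*G(\ox,\oy)(y_1^*)$. Hence $x^* \in B_0$ if and only if there is $(x_1^*,y_1^*) \in \partial\varphi(\ox,\oy)$ and $x_2^* \in D^*G(\ox,\oy)(y_1^*)$ with $x^* = x_1^* + x_2^*$; relabelling $(x_1^*,y_1^*)$ as $(x^*,y^*)$ in the notation of \eqref{A_0}, this says precisely that $x^* \in x_1^* + D^*G(\ox,\oy)(y_1^*)$ for some $(x_1^*,y_1^*) \in \partial\varphi(\ox,\oy)$, which is the defining condition for membership in $A_0$. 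Therefore $B_0 = A_0$.

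The argument is essentially a bookkeeping exercise, so there is no deep obstacle; the only point requiring a little care is the placement of parentheses in the definition~\eqref{B_0} of $B_0$ — namely that the intersection with $X^*\times\{0\}$ binds to the \emph{sum} $\partial\varphi(\ox,\oy) + N((\ox,\oy);\gph G)$ rather than to $N((\ox,\oy);\gph G)$ alone — together with the fact that one must not pass to closures, which is why the statement concerns the ``$0$-versions'' $A_0, B_0, A^\infty_0, B^\infty_0$ and not $A, B, A^\infty, B^\infty$. Once the projection-intersection identity is invoked, the equivalence reduces to the definition of $D^*G$, and the same chain of equivalences with $\partial^\infty\varphi(\ox,\oy)$ in place of $\partial\varphi(\ox,\oy)$ yields $B^\infty_0 = A^\infty_0$.
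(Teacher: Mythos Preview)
Your proof is correct and follows essentially the same approach as the paper: both arguments unpack the definitions of $B_0$ (resp.\ $B_0^\infty$) via the projection-intersection identity, express membership as the existence of a decomposition $(x_1^*,y_1^*)+(x_2^*,-y_1^*)$ with the first summand in $\partial\varphi(\ox,\oy)$ (resp.\ $\partial^\infty\varphi(\ox,\oy)$) and the second in $N((\ox,\oy);\gph G)$, and then invoke the definition of the coderivative to rewrite this as membership in $A_0$ (resp.\ $A_0^\infty$). Your remark about the parenthesisation in~\eqref{B_0} is also the correct reading, as confirmed by the paper's own chain of equivalences.
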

\begin{proof} Indeed, we have
	\begin{equation*}
		\begin{array}{rcl}
			u^*\in B_0 & \Leftrightarrow & \begin{cases}
				\exists (x^*,y^*)\in \partial \varphi(\bar x, \bar y),\ \exists (\xi^*,\eta^*)\in N((\bar x, \bar y); {\rm gph} G)\\
				{\rm with}\ y^*+\eta^*=0\ {\rm such\ that}\ x^*+\xi^*=u^*
			\end{cases}\\
			& \Leftrightarrow & \begin{cases}
				\exists (x^*,y^*)\in \partial \varphi(\bar x, \bar y)\ {\rm and}\ \xi^*\in D^*G( \bar x, \bar y)(y^*)\\
				{\rm such\ that}\ x^*+\xi^*=u^*
			\end{cases}\\
			& \Leftrightarrow & u^*\in \bigcup\limits_{(x^*,y^*) \in \partial \varphi(\bar x, \bar y)}   \big\{x^* + D^*G( \bar x, \bar y)(y^*) \big\}\\
			& \Leftrightarrow & u^*\in A_0.
		\end{array} 
	\end{equation*} Hence, $B_0=A_0$. Similarly, we have
\begin{equation*}
	\begin{array}{rcl}
		u^*\in B^\infty_0 & \Leftrightarrow & \begin{cases}
			\exists (x^*,y^*)\in \partial^\infty\varphi(\bar x, \bar y),\ \exists (\xi^*,\eta^*)\in N((\bar x, \bar y);{\rm gph} G)\\
			{\rm with}\ y^*+\eta^*=0\ {\rm such\ that}\ x^*+\xi^*=u^*
		\end{cases}\\
		& \Leftrightarrow & \begin{cases}
			\exists (x^*,y^*)\in \partial^\infty\varphi(\bar x, \bar y)\ {\rm and}\ \xi^*\in D^*G( \bar x, \bar y)(y^*)\\
			{\rm such\ that}\ x^*+\xi^*=u^*
		\end{cases}\\
		& \Leftrightarrow & u^*\in \bigcup\limits_{(x^*,y^*) \in \partial^\infty\varphi(\bar x, \bar y)}   \big\{x^* + D^*G( \bar x, \bar y)(y^*) \big\}\\
		& \Leftrightarrow & u^*\in A^\infty_0;
	\end{array} 
\end{equation*} thus, $B^\infty_0=A^\infty_0$.
$\hfill\Box$	
	\end{proof}

\begin{proposition}The sets $A_0, B_0,A^\infty_0$, and $B^\infty_0$ are convex.
\end{proposition}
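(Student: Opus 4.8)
The plan is to show each of the four sets is convex directly from the definitions, exploiting the convexity of the basic building blocks: $\partial\varphi(\bar x,\bar y)$, $\partial^\infty\varphi(\bar x,\bar y)$, $N((\bar x,\bar y);\gph G)$ are convex sets in $X^*\times Y^*$ (the first two because $\varphi$ is convex, the third because $\gph G$ is convex), and the coderivative values $D^*G(\bar x,\bar y)(y^*)$ are convex slices of that normal cone. By Proposition~\ref{prop_equalities} it suffices to prove that $B_0$ and $B_0^\infty$ are convex (since $A_0=B_0$ and $A_0^\infty=B_0^\infty$), but in fact it is just as easy to argue convexity of all four directly, and I would do the $A_0$-style argument because it is the most transparent.

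First I would recall that $N((\bar x,\bar y);\gph G)$ is a convex cone in $X^*\times Y^*$ and that, for a convex set-valued map $G$, the map $y^*\mapsto D^*G(\bar x,\bar y)(y^*)$ is \emph{sublinear} in the sense that $D^*G(\bar x,\bar y)(y_1^*)+D^*G(\bar x,\bar y)(y_2^*)\subset D^*G(\bar x,\bar y)(y_1^*+y_2^*)$ and $tD^*G(\bar x,\bar y)(y^*)=D^*G(\bar x,\bar y)(ty^*)$ for $t\ge 0$; this follows immediately from $(x^*,-y^*)\in N((\bar x,\bar y);\gph G)$ and the fact that the normal cone to a convex set is a convex cone. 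Now take $u_1^*,u_2^*\in A_0$ and $t\in[0,1]$. By \eqref{A_0} there are $(x_i^*,y_i^*)\in\partial\varphi(\bar x,\bar y)$ and $\xi_i^*\in D^*G(\bar x,\bar y)(y_i^*)$ with $u_i^*=x_i^*+\xi_i^*$. Convexity of $\partial\varphi(\bar x,\bar y)$ gives $\big((1-t)x_1^*+tx_2^*,\,(1-t)y_1^*+ty_2^*\big)\in\partial\varphi(\bar x,\bar y)$, and the sublinearity of the coderivative gives $(1-t)\xi_1^*+t\xi_2^*\in D^*G(\bar x,\bar y)\big((1-t)y_1^*+ty_2^*\big)$; hence $(1-t)u_1^*+tu_2^*\in A_0$. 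The identical computation with $\partial^\infty\varphi$ in place of $\partial\varphi$ (using that $\partial^\infty\varphi(\bar x,\bar y)=N((\bar x,\bar y);\dom\varphi)$ by Proposition~\ref{prop4.2_An_Yen}, hence convex) shows $A_0^\infty$ is convex, and then $B_0$ and $B_0^\infty$ are convex by Proposition~\ref{prop_equalities}.

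Alternatively, and perhaps cleaner for $B_0$ and $B_0^\infty$ since it needs no coderivative calculus: the set $\partial\varphi(\bar x,\bar y)+\big(N((\bar x,\bar y);\gph G)\cap(X^*\times\{0\})\big)$ is a sum of two convex sets (the second being the intersection of two convex sets), hence convex, and $\mathrm{pr}_{X^*}$ is linear, so its image $B_0$ is convex; likewise for $B_0^\infty$. (One should read \eqref{B_0} and \eqref{B-infty_0} with this parenthesization, which is the one forced by Proposition~\ref{prop_equalities}.)

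I do not expect any real obstacle here — the statement is essentially a bookkeeping exercise. The only point requiring a moment's care is the correct reading of the parentheses in \eqref{B_0}/\eqref{B-infty_0} (the intersection with $X^*\times\{0\}$ is taken of the normal cone, not of the whole sum) and the verification that $y^*\mapsto D^*G(\bar x,\bar y)(y^*)$ respects convex combinations; both are immediate from the convexity of the underlying normal cones. No topology is involved, since these are the \emph{un-closed} sets $A_0,B_0,A_0^\infty,B_0^\infty$; the closures $A,B,A^\infty,B^\infty$ are then automatically convex as well, but that is not what is asserted in this proposition.
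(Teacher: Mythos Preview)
Your proof is correct, and your ``alternative'' route is exactly the paper's own argument: invoke Proposition~\ref{prop_equalities} to reduce to $B_0$ and $B_0^\infty$, then observe their convexity is immediate from the convexity of $\partial\varphi(\bar x,\bar y)$, $\partial^\infty\varphi(\bar x,\bar y)$, and $N((\bar x,\bar y);\gph G)$ together with the fact that sums, intersections, and linear images preserve convexity. Your primary argument for $A_0$ via the sublinearity of $y^*\mapsto D^*G(\bar x,\bar y)(y^*)$ is also valid, just slightly more laborious than what the paper does.

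One small correction, though: you have the parenthesization backwards. Comparing \eqref{B_0} with \eqref{B} (where the closure bar delimits the sum) and with the proof of Proposition~\ref{prop_equalities} (where the condition is $y^*+\eta^*=0$, i.e., the \emph{sum} lands in $X^*\times\{0\}$), the intended reading is
\[
B_0={\rm pr}_{X^*}\Big[\big(\partial\varphi(\bar x,\bar y)+N((\bar x,\bar y);\gph G)\big)\cap(X^*\times\{0\})\Big],
\]
so the intersection with $X^*\times\{0\}$ is taken of the whole sum, not of the normal cone alone. With your stated reading one would get only $D^*G(\bar x,\bar y)(0)$ in play, and $B_0=A_0$ would fail in general. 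This slip does not affect the convexity conclusion (sum of convex sets, intersected with a linear subspace, then linearly projected, is still convex), but your parenthetical remark about what Proposition~\ref{prop_equalities} ``forces'' should be reversed.
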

\begin{proof} Thanks to Proposition~\ref{prop_equalities}, it suffices to show that sets $B_0$ and $B^\infty_0$ are convex. But this fact is immediate from~\eqref{B_0},~\eqref{B-infty_0}, and the convexity of the subdifferential $\partial\varphi(\bar x, \bar y)$, the singular subdifferential $\partial^\infty \varphi(\bar x, \bar y)$, and the normal cone $N((\bar x, \bar y); {\rm gph} G)$. 
$\hfill\Box$	
\end{proof}

The forthcoming theorem shows that the inclusions $A\subset B$ and $A^\infty\subset B^\infty$ hold under much weaker conditions than those formulated in Theorems~\ref{thm_gpcf}	and~\ref{thm_sing}. Of course, these weaker conditions are not enough to have the inner and outer estimates for the subdifferential and the singular subdifferential of the optimal value function presented by~\eqref{estimates_1} and~\eqref{estimates_2}.

\begin{theorem}\label{thm_inclusions} Let $\varphi: X \times Y \rightarrow \overline{\Bbb{R}}$ be a proper convex function and $G: X \rightrightarrows Y$ a convex set-valued map. Then for any $\bar x\in X$ with $\mu(\bar x)\in\mathbb R$, and for any $\bar y \in G(\bar x)$, one has $A\subset B$ and $A^\infty\subset B^\infty$.
\end{theorem}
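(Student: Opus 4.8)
The plan is to reduce both inclusions to Proposition~\ref{prop_equalities} together with the weak$^*$-closedness of the sets $B$ and $B^\infty$. Recall that, by the definitions \eqref{A}, \eqref{A_0} and \eqref{A-infty}, \eqref{A-infty_0}, we have $A=\overline{A_0}$ and $A^\infty=\overline{A^\infty_0}$, where the closures are taken in the weak$^*$ topology of $X^*$.

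First I would record the trivial monotonicity $B_0\subset B$ and $B^\infty_0\subset B^\infty$. Indeed, $\partial\varphi(\bar x,\bar y)+N((\bar x,\bar y);\gph G)$ is contained in its weak$^*$ closure $\overline{\partial\varphi(\bar x,\bar y)+N((\bar x,\bar y);\gph G)}$, so intersecting with the subspace $X^*\times\{0\}$ and applying ${\rm pr}_{X^*}$ preserves the inclusion; the same computation with $\partial^\infty\varphi(\bar x,\bar y)$ in place of $\partial\varphi(\bar x,\bar y)$ gives $B^\infty_0\subset B^\infty$. Combining this with Proposition~\ref{prop_equalities}, which asserts $A_0=B_0$ and $A^\infty_0=B^\infty_0$, we obtain $A_0\subset B$ and $A^\infty_0\subset B^\infty$.

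Second I would prove that $B$ and $B^\infty$ are weak$^*$-closed subsets of $X^*$. The key point is that $X^*\times\{0\}$ is a weak$^*$-closed linear subspace of $(X\times Y)^*=X^*\times Y^*$, and that the restriction of ${\rm pr}_{X^*}$ to $X^*\times\{0\}$ is a homeomorphism onto $X^*$ for the weak$^*$ topologies, its inverse $x^*\mapsto (x^*,0)$ being the canonical continuous embedding of $X^*$ into $X^*\times Y^*$. Since $\overline{\partial\varphi(\bar x,\bar y)+N((\bar x,\bar y);\gph G)}$ is weak$^*$-closed by construction, its intersection with $X^*\times\{0\}$ is a weak$^*$-closed subset of $X^*\times\{0\}$, hence its image under the above homeomorphism, which is precisely $B$, is weak$^*$-closed in $X^*$. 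The identical argument with $\partial^\infty\varphi(\bar x,\bar y)$ shows that $B^\infty$ is weak$^*$-closed.

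Finally, taking weak$^*$ closures in $A_0\subset B$ and using that $B$ is closed gives $A=\overline{A_0}\subset\overline{B}=B$; likewise $A^\infty=\overline{A^\infty_0}\subset\overline{B^\infty}=B^\infty$. I expect the only non-routine step to be the closedness claim — the observation that ${\rm pr}_{X^*}$ sends weak$^*$-closed subsets of $X^*\times\{0\}$ to weak$^*$-closed subsets of $X^*$ — while everything else is bookkeeping with the definitions \eqref{A}--\eqref{B-infty_0} and Proposition~\ref{prop_equalities}. It is worth noting that neither generalized polyhedrality nor lower semicontinuity of $\mu$ enters this argument, which is why the conclusion holds under the weak hypotheses stated; moreover, in the degenerate case $\partial\varphi(\bar x,\bar y)=\emptyset$ (resp.\ $\partial^\infty\varphi(\bar x,\bar y)=\emptyset$) both sides of the corresponding inclusion are empty and there is nothing to prove.
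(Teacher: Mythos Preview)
Your proof is correct, and the underlying idea is the same as the paper's: both arguments rest on the fact that $u^*\in A_0$ if and only if $(u^*,0)\in \partial\varphi(\bar x,\bar y)+N((\bar x,\bar y);\gph G)$, together with a passage to the weak$^*$ limit. The paper carries this out directly with nets---given $u^*_\alpha\in A_0$ with $u^*_\alpha\overset{w^*}{\to}u^*$, it rewrites each $(u^*_\alpha,0)$ as a sum in $\partial\varphi+N$ and then lets $\alpha$ pass to the limit inside the closure---whereas you factor the same computation through Proposition~\ref{prop_equalities} and the separate observation that $B$ is weak$^*$-closed. Your organization has a small dividend: you establish that $B$ and $B^\infty$ are weak$^*$-closed under \emph{no} hypotheses on $\varphi$ and $G$ beyond the bare definitions, while the paper only records this fact later (after Theorem~\ref{new-thm}) and only under the generalized polyhedrality assumptions, deriving it indirectly from $B=\partial\mu(\bar x)$. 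One minor remark on your final parenthetical: since $0$ always lies in the normal cone to $\dom\varphi$, the singular subdifferential $\partial^\infty\varphi(\bar x,\bar y)$ is never empty when $\varphi(\bar x,\bar y)\in\mathbb{R}$, so that degenerate case does not actually arise.
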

\begin{proof}  Under the assumptions made, take any $u^*\in A$. Then, there exists a net $\{u^*_\alpha\}\subset X^*$ with $\alpha$ belonging to a directed set $I$ such that $u^*_\alpha\overset{w^*}{\rightarrow} u^*$, and $u^*_\alpha\in A_0$ for all $\alpha\in I$. So, for each $\alpha\in I$, one can find $(x^*_\alpha,y^*_\alpha)\in \partial\varphi(\bar x, \bar y)$ and  $x^*_{1,\alpha}\in D^*G( \bar x, \bar y)(y^*_\alpha)$ with $u^*_\alpha=x^*_\alpha+x^*_{1,\alpha}$. By the inclusion $x^*_{1,\alpha}\in D^*G( \bar x, \bar y)(y^*_\alpha)$ and the definition of coderivative, it holds that $(x^*_{1,\alpha},-y^*_\alpha)\in  N( (\bar x, \bar y); {\rm {gph}}\,G)$. Hence, setting $y^*_{1,\alpha}=-y^*_\alpha$, we have $(x^*_{1,\alpha},y^*_{1,\alpha})\in  N( (\bar x, \bar y); {\rm {gph}}\,G)$ and 
\begin{equation}\label{sect3_expression}\begin{array}{rcl}
	(u^*_\alpha,0) & = & (x^*_\alpha + x^*_{1,\alpha},y^*_\alpha+y^*_{1,\alpha})\\
	& = & (x^*_\alpha,y^*_\alpha)+(x^*_{1,\alpha}+y^*_{1,\alpha})\\
	& \in & \partial\varphi(\bar x, \bar y) + N( (\bar x, \bar y); {\rm {gph}}\,G)\\ & \subset & \overline{\partial\varphi(\bar x, \bar y) + N( (\bar x, \bar y); {\rm {gph}}\,G)}.
	\end{array}
\end{equation} Passing the relations in~\eqref{sect3_expression} to the limit w.r.t. $\alpha\in I$ yields
\begin{equation*}
(u^*,0)\in \overline{\partial\varphi(\bar x, \bar y) + N( (\bar x, \bar y); {\rm {gph}}\,G)}.
\end{equation*} It follows that 
\begin{equation*}
u^*\in 	{\rm pr}_{X^*}\left[\overline{\partial\varphi(\bar x, \bar y) + N( (\bar x, \bar y); {\rm {gph}}\,G)}\cap \big(X^*\times\{0\}\big)\right]=B.
\end{equation*} Thus, we have proved that $A\subset B$.

The inclusion $A^\infty\subset B^\infty$ can be obtained by repeating the above arguments, provided that $A, A_0$, $\partial\varphi(\bar x, \bar y)$, and $B$ are replaced respectively by $A^\infty, A^\infty_0$, $\partial^\infty\varphi(\bar x, \bar y)$, and $B^\infty$.
$\hfill\Box$	
\end{proof}

To prove the next result, we can partially use the construction given by An et al. in~\cite[Section~4]{AnLuanYen_2023}. But, to achieve the aim, we have to represent the Hilbert space $H$ in question as the direct sum of two completely different closed linear subspaces.     

\begin{theorem}\label{thm_solution1} There exist parametric generalized  polyhedral convex optimization problems of the form~\eqref{math_program} where, for some $\bar x\in X$ with $\mu(\bar x)\in\mathbb R$ and for some $\bar y \in M(\bar x)$, one has
\begin{equation}\label{example1} A\subset\partial\mu(\bar x)=B, \ \; A\neq B
	\end{equation}
and 
\begin{equation}\label{example1_sing}	A^\infty\subset\partial^\infty\mu(\bar x)=B^\infty, \ \; A^\infty\neq B^\infty.
\end{equation}
\end{theorem}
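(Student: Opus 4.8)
The plan is to construct a concrete example in an infinite-dimensional Hilbert space $H$ in which the optimal value function $\mu$ can be computed explicitly, its subdifferential (and singular subdifferential) computed directly, and the outer estimate set $B$ (resp. $B^\infty$) shown to coincide with $\partial\mu(\bar x)$ (resp. $\partial^\infty\mu(\bar x)$), while the inner estimate set $A$ (resp. $A^\infty$) is strictly smaller. The key design principle, hinted at in the remark preceding the statement, is to take $X=H=H_1\oplus H_2$ as an orthogonal direct sum of two closed subspaces playing ``completely different'' roles: one subspace will carry the polyhedral/affine-constraint behaviour (so that the set $A_0=B_0$ is already nicely behaved there), and the other will be an infinite-dimensional subspace engineered so that the closure operation in the definitions of $A$ and $B$ genuinely adds points coming from $N((\bar x,\bar y);\gph G)$ that are not reachable through the coderivative decomposition without passing to the limit. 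Because $A_0=B_0$ and $A_0^\infty=B_0^\infty$ by Proposition~\ref{prop_equalities}, the strictness $A\neq B$ can only come from the two closure operations producing different sets, i.e. $\overline{A_0}$ versus $B=\mathrm{pr}_{X^*}\big[\overline{\partial\varphi+N}\cap(X^*\times\{0\})\big]$; the example must exploit that taking the closure of $\partial\varphi+N$ in $X^*\times Y^*$ and then intersecting with $X^*\times\{0\}$ and projecting can yield strictly more than first intersecting, projecting, and then closing.

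Concretely, I would choose $Y$ to be (a copy of) a Hilbert space as well, take $G$ to be a generalized polyhedral convex set-valued map whose graph is an affine subspace twisted across the two summands $H_1,H_2$ — for instance $\gph G$ the closure of the span of an orthonormal-type family together with a ``limiting'' direction not in the algebraic span — so that $N((\bar x,\bar y);\gph G)=(\gph G - (\bar x,\bar y))^\perp$ is a closed subspace whose intersection with $X^*\times\{0\}$ is larger than what the algebraic sum $A_0$ detects. The objective $\varphi$ I would take to be a simple proper polyhedral (or generalized polyhedral) convex function on $X\times Y$, e.g. a supremum of finitely many continuous affine functions over a generalized polyhedral domain, chosen so that $\partial\varphi(\bar x,\bar y)$ and $\partial^\infty\varphi(\bar x,\bar y)$ are explicit finite-dimensional-looking sets. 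Then I would: (i) compute $\mu$ in closed form using $\mu(x)=\inf\{\varphi(x,y):y\in G(x)\}$ and verify it is a proper generalized polyhedral (hence lower semicontinuous) convex function on $X$, checking that $\bar y\in M(\bar x)$ and $\mu(\bar x)\in\mathbb{R}$; (ii) compute $\partial\mu(\bar x)$ and $\partial^\infty\mu(\bar x)$ directly from this closed form; (iii) compute $B$ and $B^\infty$ from \eqref{B}, \eqref{B-infty} and verify they equal the quantities from (ii) — this is consistent with Theorems~\ref{thm_subdiff}, \ref{thm_sing} giving $\partial\mu(\bar x)\subset B$, and only equality needs to be checked; (iv) compute $A_0=B_0$ and $A_0^\infty=B_0^\infty$ from \eqref{A_0}, \eqref{B_0} and exhibit an explicit point of $B\setminus\overline{A_0}=B\setminus A$ (and similarly for the singular versions), thereby proving \eqref{example1} and \eqref{example1_sing}.

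The main obstacle I anticipate is step (iv): producing a point that provably lies in $B$ but \emph{not} in the weak$^*$ closure $A=\overline{A_0}$. Since $A_0=B_0\subset B$ and $B$ is weak$^*$-closed (being a projection of an intersection of weak$^*$-closed sets along a continuous linear map, at least when that map restricted to the relevant subspace is proper), the inclusion $A=\overline{A_0}\subset B$ is automatic; the delicate part is the separation showing the inclusion is strict. This requires carefully arranging the infinite-dimensional summand so that the ``gap'' between the algebraic sum $\partial\varphi+N$ and its closure does not disappear after intersecting with the hyperplane-type set $X^*\times\{0\}$ — i.e. the non-closedness must be ``transverse'' to that intersection. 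A natural device is to let the normal cone $N((\bar x,\bar y);\gph G)$ contain a sequence of elements whose $Y^*$-components tend weakly$^*$ to zero but are never zero, while their $X^*$-components converge to a nonzero limit that is not an $X^*$-component of any element of $N$ with vanishing $Y^*$-component; combining this with a fixed element of $\partial\varphi$ produces the desired witness in $B\setminus A$. I would verify by a direct weak$^*$-separation/Hahn--Banach argument (or by an explicit coordinate computation in the orthonormal basis) that the witness is genuinely outside $\overline{A_0}$. The rest of the proof is bookkeeping once the space decomposition $H=H_1\oplus H_2$ and the maps $G$, $\varphi$ are pinned down.
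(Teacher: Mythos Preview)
Your proposal correctly identifies the mechanism: since $A_0=B_0$ by Proposition~\ref{prop_equalities}, strictness $A\neq B$ can only arise because closing $\partial\varphi(\bar x,\bar y)+N((\bar x,\bar y);\gph G)$ in $X^*\times Y^*$ before intersecting with $X^*\times\{0\}$ yields more than closing the projection $A_0$ in $X^*$; and your last paragraph isolates the right phenomenon (a sequence in the normal cone whose $Y^*$-parts tend weakly to $0$ while the $X^*$-parts converge to something unreachable with $Y^*$-part exactly $0$). But the proposal is not a proof: the construction is never specified. You do not say what $Y$, $G$, $\varphi$, $(\bar x,\bar y)$, or the witness point actually are, and the internal decomposition $X=H_1\oplus H_2$ you emphasize plays no visible role in the mechanism you describe, which concerns the split $X^*\times Y^*$, not two pieces of $X^*$. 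Your parenthetical claim that $B$ is weak$^*$-closed as ``a projection of an intersection of weak$^*$-closed sets along a continuous linear map'' is also unjustified in general---projections of closed sets need not be closed; this closedness is precisely Question~2, settled only afterward via Theorem~\ref{new-thm}.

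The paper's construction is far simpler and fully explicit. Take a separable Hilbert space $H$ and closed subspaces $C,D\subset H$ with $C\cap D=\{0\}$, $C+D\neq H$, $\overline{C+D}=H$ (such pairs exist by~\cite[Example~3.34]{BC_2011}). Pick $z\in H\setminus(C+D)$ and set $X=\mathrm{span}\{z\}$ (one-dimensional), $Y=X^\perp$, so $H=X\oplus Y\equiv X\times Y$; then put $\varphi=\delta(\cdot;C^\perp)$ and $\gph G=D^\perp$. At $(\bar x,\bar y)=(0,0)$ one has $\partial\varphi(\bar x,\bar y)=\partial^\infty\varphi(\bar x,\bar y)=C$, $N((\bar x,\bar y);\gph G)=D$, and $\mu=\delta(\cdot;\{0\})$ on $X$, hence $\partial\mu(\bar x)=\partial^\infty\mu(\bar x)=X^*$. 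Since no nonzero multiple of $z$ lies in $C+D$, one gets $A_0=\{0\}$ and thus $A=\{0\}$; since $\overline{C+D}=H$, one gets $B=X^*$. The singular case is identical. The decisive idea you are missing is to make the \emph{parameter space} $X$ one-dimensional, spanned by a vector in $\overline{C+D}\setminus(C+D)$; this collapses every computation to a triviality and produces the witness automatically.
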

\begin{proof} Let $H$ be an infinite-dimensional separable Hilbert space. (For instance, we can choose $H=\ell_2$, the space of real sequences $x=(x_1,x_2,\ldots)$ satisfying the condition $\disp\sum_{k=1}^{\infty}x_k^2<+\infty$, and endow $H$ with the inner product $\langle x,y\rangle = \disp\sum_{k=1}^{\infty} x_ky_k$ and the norm $\|x\|=\langle x,x\rangle^{1/2}$.) Then, $H$ admits an orthonormal vector system $\{e_k\}_{k=1}^\infty$ which is dense in $H$. As it has been shown in~\cite[Example~3.34]{BC_2011}, there exist infinite-dimensional closed linear subspaces $C$ and $D$ of $H$ such that
	\begin{equation}\label{C_D}
		C\cap D=\{0\},\quad C+D\neq H,\quad \overline{C+D}=H.
	\end{equation} By~\eqref{C_D}, there is a vector $z\in H\setminus (C+D)$. Let $X$ be the one-dimensional linear subspace generated by $z$, i.e., $X={\rm span}\{z\}:=\{\alpha z\mid \alpha\in\mathbb R\}$, and $Y:=X^\perp$ with $\Omega^\perp:=\{w\in H\mid \langle w,v\rangle=0\ {\rm for\ every}\ v\in \Omega\}$ denoting the orthogonal subspace to a nonempty subset $\Omega\subset H$. Then, by~\cite[Theorem~12.4]{Rudin_1991} one has $H=X\oplus Y$. So, $H$ can be regarded as the product of the Hilbert spaces $X$ and $Y$; that is, $H=X\oplus Y\equiv X\times Y$. Consider the function  $\varphi: X \times Y \rightarrow \overline{\Bbb{R}}$ defined by setting \begin{equation}\label{varphi_ex1}\varphi(x,y)=\delta((x,y);C^\perp)=\begin{cases} 0 & {\rm if}\  (x,y)\in C^\perp\\
		+\infty & {\rm if}\  (x,y)\notin C^\perp\end{cases}
	\end{equation} 
	and  the set-valued map $G: X \rightrightarrows Y$ defined by setting ${\rm gph}\,G=D^\perp$. 
	
Applying~\cite[Theorem~3.2]{LuanYaoYen_2018}, we can verify that  $\varphi: X \times Y \rightarrow \overline{\Bbb{R}}$ is a proper generalized polyhedral convex function.  The fact that $G: X \rightrightarrows Y$ is a generalized polyhedral convex set-valued map follows from the definition. Observe that $(X\times Y)^*=H^*=H$. Since $C^\perp$ and $D^\perp$ are closed linear subspaces of the Hilbert space $H=X\oplus Y\equiv X\times Y$, from~\eqref{varphi_ex1}, the definition of subdifferential, and Proposition~\ref{prop4.2_An_Yen} it follows that \begin{equation}\label{subdiff_varphi}\partial\varphi(x,y)=\partial^\infty\varphi(x,y)=(C^\perp)^\perp=C\quad \forall(x,y)\in {\rm dom}\,\varphi=C^\perp. 
	\end{equation} In addition, one has
	\begin{equation}\label{normal_gphG} N((x,y); {\rm gph}\,G)=(D^\perp)^\perp=D\quad \forall(x,y)\in {\rm gph}\,G=D^\perp.
	\end{equation} Using~\eqref{C_D}, one can easily prove that $C^\perp\cap D^\perp=\{0\}$. Thus,
	$${\rm dom}\,\varphi\cap {\rm gph}\,G=\{(0,0)\}\subset X\times Y=X\oplus Y=H.$$ So, by the formulas~\eqref{marginalfunction} and~\eqref{solution_map} we get
	$$\mu(x)=\begin{cases} 0 & {\rm if}\ x=0\\  
		+\infty & {\rm if}\ x\neq 0\end{cases}$$ 
	and $$M(x)=\begin{cases} \{0\} & {\rm if}\ x= 0\\  
		\emptyset & {\rm if}\ x\neq 0.\end{cases}$$ 
	Consequently, choosing $\bar x=0$ and $\bar y=0$ gives $\mu(\bar x)\in\mathbb R$, $\bar y \in M(\bar x)$, and 
	\begin{equation*}\partial \mu(\bar x)=\partial^\infty \mu(\bar x)=X^*.
	\end{equation*} 
	(One has $X^*=X$.) In addition, by~\eqref{subdiff_varphi} and~\eqref{normal_gphG} we have $$\partial\varphi(\bar x,\bar y)=\partial^\infty\varphi(\bar x,\bar y)=C$$ and $N((\bar x,\bar y); {\rm gph}\,G)=D.$ Therefore, 
	$$\begin{array}{rcl}	
A&=&\overline{\bigcup\limits_{(x^*,y^*) \in \partial \varphi(\bar x, \bar y)}\big\{x^* + D^*G( \bar x, \bar y)(y^*) \big\}}\\
&=&	\overline{\bigcup\limits_{(x^*,y^*) \in C} \big[x^* + \{u^*\in X^*\mid (u^*,-y^*)\in D\}\big]}\\
&=&	\overline{\bigcup\limits_{(x^*,y^*) \in C} \big\{z^*\in X^* \mid (z^*,0)\in (x^*,y^*)+D\big\}}\\
&=&	\overline{\big\{\alpha z\mid \alpha\in\mathbb R,\ \alpha z+0\in C+D\big\}}\\
&=& \{0\}.
 \end{array}$$ Meanwhile, we have
	$$\begin{array}{rcl}	
	B &=& {\rm pr}_{X^*}\left[\overline{\partial\varphi(\bar x, \bar y) + N( (\bar x, \bar y); {\rm {gph}}\,G)}\cap \big(X^*\times\{0\}\big)\right]\\
	&=&	{\rm pr}_{X^*}\left[\overline{C+D}\cap \big(X^*\times\{0\}\big)\right]\\
	&=& {\rm pr}_{X^*}\left[H\cap \big(X^*\times\{0\}\big)\right]\\
	&=& {\rm pr}_{X^*}\left(X^*\times\{0\}\right)\\
	&=& X^*.
\end{array}$$ Similar computations show that 	$A^\infty=\{0\}$ and $B^\infty=X^*$. 

Summing up all the above, we can assert that the properties in~\eqref{example1} and~\eqref{example1_sing} are valid. $\hfill\Box$	
\end{proof}

\section{Further Investigations on Differential Stability of $(P_x)$}\label{sect_4}

In connection with Proposition~\ref{prop_equalities} and Theorem~\ref{thm_solution1}, the following questions seem to be reasonable and interesting.

\smallskip
\textbf{Question~2.} \textit{The sets $B$ and $B^\infty$ are always closed?}

\smallskip
\textbf{Question~3.} \textit{If $\partial\mu (\bar x)=A$, then $B=A$?} 
	
\smallskip
\textbf{Question~4.} \textit{If $\partial^\infty\mu (\bar x)=A^\infty$, then $B^\infty=A^\infty$?}

\smallskip
\textbf{Question~5.} \textit{Whether the equalities in~\eqref{example1} and~\eqref{example1_sing} hold for any parametric generalized  polyhedral convex optimization problem of the form~\eqref{math_program}, provided that $\bar x\in X$ with $\mu(\bar x)\in\mathbb R$ and $\bar y \in M(\bar x)$?} 

\medskip
The next result answers Question~5 in the affirmative.

\begin{theorem}\label{new-thm} Let $\varphi: X \times Y \rightarrow \overline{\Bbb{R}}$ be a proper generalized polyhedral convex function and $G: X \rightrightarrows Y$ a generalized polyhedral convex set-valued map. Then for any $\bar x\in X$ with $\mu(\bar x)\in\mathbb R$, and for any $\bar y \in M(\bar x)$, one has
		\begin{equation}\label{subdiff_equality1}
			\partial\mu(\bar x) = B
		\end{equation}
	and
		\begin{equation}\label{subdiff_equality2}
		\partial^\infty\mu(\bar x) = B^\infty.
	\end{equation}
	\end{theorem}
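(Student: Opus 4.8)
The plan is to exploit the generalized polyhedral convexity assumptions to reduce the computation of $\partial\mu(\bar x)$ and $\partial^\infty\mu(\bar x)$ to a subdifferential sum rule \emph{without closure} (Theorem~\ref{thm_sum_rule}), and then to recognize the resulting expression as exactly the set $B$ (resp.\ $B^\infty$). First I would observe that the value function $\mu$ can be written as
\[
\mu(\bar x)=\inf_{y\in Y}\big[\varphi(\bar x,y)+\delta\big((\bar x,y);\gph G\big)\big],
\]
and that the function $\psi(x,y):=\varphi(x,y)+\delta((x,y);\gph G)$ is a proper generalized polyhedral convex function on $X\times Y$, being the sum of two such functions (the indicator of the generalized polyhedral convex set $\gph G$ is generalized polyhedral convex). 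Since $\bar y\in M(\bar x)$, the infimum is attained at $\bar y$, so $\mu$ is a partial infimal-value (marginal) function of a single generalized polyhedral convex function with the minimum attained. The key is that for such functions the marginal map inherits generalized polyhedral convexity, and one has an \emph{exact} formula $\partial\mu(\bar x)=\{x^*\in X^*\mid (x^*,0)\in\partial\psi(\bar x,\bar y)\}$ and $\partial^\infty\mu(\bar x)=\{x^*\mid (x^*,0)\in\partial^\infty\psi(\bar x,\bar y)\}={\rm pr}_{X^*}[N((\bar x,\bar y);\dom\psi)\cap(X^*\times\{0\})]$; these come from projecting the (generalized polyhedral convex) epigraph of $\psi$ onto $X\times\mathbb R$ and using that the projection of a generalized polyhedral convex set is generalized polyhedral convex, hence closed, so no extra closure is needed.

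Next I would apply the sum rule of Theorem~\ref{thm_sum_rule} to $\psi=\varphi+\delta(\cdot;\gph G)$ at $(\bar x,\bar y)$, which lies in $\dom\varphi\cap\gph G$. This gives
\[
\partial\psi(\bar x,\bar y)=\overline{\partial\varphi(\bar x,\bar y)+N\big((\bar x,\bar y);\gph G\big)},
\]
using that $\partial\delta(\cdot;\gph G)(\bar x,\bar y)=N((\bar x,\bar y);\gph G)$. Combining with the exact marginal formula,
\[
\partial\mu(\bar x)=\Big\{x^*\in X^*\;\Big|\;(x^*,0)\in\overline{\partial\varphi(\bar x,\bar y)+N\big((\bar x,\bar y);\gph G\big)}\Big\}
={\rm pr}_{X^*}\Big[\overline{\partial\varphi(\bar x,\bar y)+N\big((\bar x,\bar y);\gph G\big)}\cap\big(X^*\times\{0\}\big)\Big]=B.
\]
For the singular subdifferential, by Proposition~\ref{prop4.2_An_Yen} everything reduces to normal cones of effective domains: $\partial^\infty\psi(\bar x,\bar y)=N((\bar x,\bar y);\dom\varphi\cap\gph G)$, and since $\dom\varphi$ and $\gph G$ are generalized polyhedral convex sets, I would invoke the generalized-polyhedral intersection rule for normal cones (the version of the sum rule applied to $\delta(\cdot;\dom\varphi)+\delta(\cdot;\gph G)$, again Theorem~\ref{thm_sum_rule}) to get $N((\bar x,\bar y);\dom\varphi\cap\gph G)=\overline{N((\bar x,\bar y);\dom\varphi)+N((\bar x,\bar y);\gph G)}=\overline{\partial^\infty\varphi(\bar x,\bar y)+N((\bar x,\bar y);\gph G)}$, whence $\partial^\infty\mu(\bar x)=B^\infty$ by the same projection argument.

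The main obstacle — and the point that makes the statement nontrivial, given that Theorem~\ref{thm_solution1} shows $B$ can strictly contain $A$ while $A$ already carries a closure — is justifying the \emph{exact} marginal formula $\partial\mu(\bar x)={\rm pr}_{X^*}[\partial\psi(\bar x,\bar y)\cap(X^*\times\{0\})]$ with no additional qualification condition and no extra closure. The delicate issue is that the projection ${\rm pr}_{X\times\mathbb R}:X\times Y\times\mathbb R\to X\times\mathbb R$ sends $\epi\psi$ to a set whose closure equals $\epi\mu$ only if $\mu$ is lower semicontinuous; here lower semicontinuity of $\mu$ is exactly what is needed and is available because $\psi$ is generalized polyhedral convex and $\bar y\in M(\bar x)$ guarantees the infimum is attained, so by~\cite[Theorem~4.7]{LuanNamYen_2023} (cited in the excerpt) $\mu$ is itself generalized polyhedral convex, in particular closed. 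One must check carefully that attainment at $\bar y$ plus the polyhedral structure indeed forces the projected epigraph to be already closed (equivalently, generalized polyhedral convex, since projections of generalized polyhedral convex sets under continuous linear maps stay generalized polyhedral convex — a fact from~\cite{LuanYaoYen_2018}), and that the fibre over $(\bar x,\mu(\bar x))$ is hit, which is where $\bar y\in M(\bar x)$ enters. Once this is in place, the sum rule and Proposition~\ref{prop4.2_An_Yen} do the rest mechanically.
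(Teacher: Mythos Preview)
Your overall architecture is right and matches the paper's: reduce to $\psi=\varphi+\delta(\cdot;\gph G)$, apply the sum rule of Theorem~\ref{thm_sum_rule} to get $\partial\psi(\bar x,\bar y)=\overline{\partial\varphi(\bar x,\bar y)+N((\bar x,\bar y);\gph G)}$, and combine with the marginal identity $\partial\mu(\bar x)=\{x^*:(x^*,0)\in\partial\psi(\bar x,\bar y)\}$ (and the analogous one for $\partial^\infty$). The paper does exactly this, writing out the direction $B\subset\partial\mu(\bar x)$ by hand and invoking the already-known upper estimate~\eqref{subdiff_outer} for the reverse inclusion.

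The gap is in your justification of the marginal identity. You argue it via epigraph projection, claiming that ``the projection of a generalized polyhedral convex set is generalized polyhedral convex, hence closed'' and that $\mu$ therefore inherits generalized polyhedral convexity. This is not true in general in infinite dimensions: the paper itself recalls (just before Theorem~\ref{thm_subdiff}) that $\mu$ is generalized polyhedral convex \emph{if and only if} it is lower semicontinuous, and nothing in the hypotheses forces lower semicontinuity of $\mu$; attainment of the infimum at the single point $\bar x$ certainly does not. A linear image of a closed affine subspace (the prototypical gpc set) can be a dense, non-closed subspace. So the route through ``projected epigraph is closed'' is broken.

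Fortunately the marginal identity needs none of this. It is a two-line consequence of the definitions, valid for any convex $\psi$ with $\psi(\bar x,\bar y)=\mu(\bar x)$: if $(x^*,0)\in\partial\psi(\bar x,\bar y)$ then $\psi(x,y)\ge\mu(\bar x)+\langle x^*,x-\bar x\rangle$ for all $(x,y)$, and taking $\inf_y$ gives $x^*\in\partial\mu(\bar x)$; conversely, $x^*\in\partial\mu(\bar x)$ and $\psi(x,y)\ge\mu(x)$ give $(x^*,0)\in\partial\psi(\bar x,\bar y)$. The singular case is the same computation with normal cones to $\dom\psi$ and its projection $\dom\mu$. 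This is precisely what the paper writes out. Replace your epigraph-projection paragraph with this elementary verification and your proof is complete and essentially identical to the paper's.
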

	\begin{proof}
		Let $\bar x\in X$ with $\mu(\bar x)\in\mathbb R$ and $\bar y \in M(\bar x)$ be given arbitrarily. By~\eqref{solution_map} we have $\mu(\bar x)= \varphi (\bar x,\bar y)$. Moreover, the inclusion
		\begin{equation}\label{nonempty_intersection}
			(\bar x,\bar y)\in \dom\varphi\cap \gph G
		\end{equation} is valid. Take any vector $\bar x^* \in B$. Then, by~\eqref{B} one has 	
		\begin{equation}\label{xtar_0} (\bar x^*,0) \in\overline{\partial  \varphi(\bar x, \bar y)+ \partial\delta(\cdot; {\rm {gph}}\,G)(\bar x, \bar y)}.\end{equation}
		Since ${\rm {gph}}\,G$ is a nonempty generalized polyhedral convex set, the indicator function $\delta(\cdot;{\rm gph}\,G): X \times Y \rightarrow \overline{\Bbb R}$ is proper generalized polyhedral convex. Using~\eqref{nonempty_intersection} and the equality ${\rm dom}\,\delta(\cdot;{\rm gph}\,G)={\rm gph}\,G$, we can apply the sum rule in Theorem~\ref{thm_sum_rule} to get
		$$\partial\big(\varphi + \delta(\cdot;{\rm gph}\,G)\big) (\bar x,\bar y)=\overline{\partial  \varphi(\bar x, \bar y)+ \partial\delta(\cdot; {\rm {gph}}\,G)(\bar x, \bar y)}.$$
		Then, from~\eqref{xtar_0} it follows that
		\begin{equation}\label{inclusion_basic}
			(\bar x^*, 0) \in \partial\big(\varphi + \delta(\cdot;{\rm gph}\,G)\big) (\bar x,\bar y).
		\end{equation}
		Fix any $x\in X$. By \eqref{inclusion_basic}, 
		for any point $y \in G(x)$ one has
		\begin{align*}
			\varphi(x,y) - \varphi(\bar x, \bar y)\geq \langle \bar x^*, x -\bar x \rangle + \langle 0, y -\bar y \rangle.
		\end{align*}
		So, we get
		\begin{align*}
			\varphi(x,y) - \mu(\bar x)\geq \langle \bar x^*, x -\bar x \rangle, \quad \forall y \in G(x).
		\end{align*}
		This implies that 
		\begin{align*}
			\inf\limits_{y \in G(x)}\varphi(x,y) - \mu(\bar x)\geq \langle \bar x^*, x -\bar x \rangle.
		\end{align*}
		Therefore, $\mu(x) - \mu(\bar x)\geq \langle \bar x^*, x -\bar x \rangle$. Since $x \in X$ was taken arbitrarily, this implies that $\bar x^* \in \partial\mu(\bar x)$. Thus, the inclusion $B \subset \partial\mu(\bar x)$ holds. Since the reverse inclusion is valid by~\eqref{estimates_1}, we have proved the equality in~\eqref{subdiff_equality1}.
		
		Now, to obtain~\eqref{subdiff_equality2}, let us fix any vector $\bar x^* \in B^\infty$. By~\eqref{B-infty} we have
		\begin{equation}\label{xtar_0_infty} (\bar x^*,0) \in\overline{\partial^\infty  \varphi(\bar x, \bar y)+ \partial\delta(\cdot; {\rm {gph}}\,G)(\bar x, \bar y)}.\end{equation}
		Applying Proposition~\ref{prop4.2_An_Yen} yields
		$$\partial^\infty  \varphi(\bar x, \bar y)=N((\bar x, \bar y);{\rm dom}\,\varphi)=\partial\delta ((\bar x, \bar y);{\rm dom}\,\varphi).$$
		Thus, we can equivalently rewrite~\eqref{xtar_0_infty} as
		\begin{equation}\label{xtar_0_infty_1} (\bar x^*,0) \in\overline{\partial\delta ((\bar x, \bar y);{\rm dom}\,\varphi)+ \partial\delta(\cdot; {\rm {gph}}\,G)(\bar x, \bar y)}.\end{equation} Since $\varphi$ is a proper generalized polyhedral convex function, by~\cite[Theorem~3.2]{LuanYaoYen_2018} we see that ${\rm dom}\,\varphi$ is a nonempty generalized polyhedral convex set. Hence, the indicator function $\delta(\cdot;{\rm dom}\,\varphi): X \times Y \rightarrow \overline{\Bbb R}$ is proper generalized polyhedral convex.	In addition, as it has been noted in the first part of this proof, the indicator function $\delta(\cdot;{\rm gph}\,G): X \times Y \rightarrow \overline{\Bbb R}$ is proper generalized polyhedral convex. Therefore, the fact that $(\bar x, \bar y)\in ({\rm dom}\,\varphi )\cap {\rm gph}\,G)$ allows us to apply the sum rule in Theorem~\ref{thm_sum_rule} to obtain 
		\begin{equation}\label{sum_rule_thm3-3} \partial\big(\delta(\cdot;{\rm dom}\,\varphi)\!+\! \delta(\cdot;{\rm gph}\,G)\big) (\bar x,\bar y)\!=\! \overline{\partial\delta ((\bar x, \bar y);{\rm dom}\,\varphi)+ \partial\delta(\cdot; {\rm {gph}}\,G)(\bar x, \bar y)}.\end{equation}
		So, combining~\eqref{xtar_0_infty_1} with~\eqref{sum_rule_thm3-3} gives 
		$$(\bar x^*,0) \in \partial\big(\delta(\cdot;{\rm dom}\,\varphi)+ \delta(\cdot;{\rm gph}\,G)\big) (\bar x,\bar y).$$ Since $\big(\delta(\cdot;{\rm dom}\,\varphi)+ \delta(\cdot;{\rm gph}\,G)\big) (\bar x,\bar y)=0$, the latter implies that 
		$$\big(\delta(\cdot;{\rm dom}\,\varphi)+ \delta(\cdot;{\rm gph}\,G)\big) (x,y)\geq\langle\bar x^*,x-\bar x\rangle$$ for all $(x,y)\in ({\rm dom}\,\varphi)\cap ({\rm gph}\,G)$. This means that 
		\begin{equation}\label{ineq_bar xstar} \langle\bar x^*,x-\bar x\rangle\leq 0\quad\; \forall (x,y)\in ({\rm dom}\,\varphi)\cap ({\rm gph}\,G).\end{equation}
	    For any $x\in {\rm dom}\mu$, since $\mu(x)<+\infty$, by~\eqref{marginalfunction} one can find $y\in G(x)$ such that $(x,y)\in {\rm dom}\,\varphi$. Then one has  $(x,y)\in ({\rm dom}\,\varphi)\cap ({\rm gph}\,G)$. Therefore, from~\eqref{ineq_bar xstar} it follows that $\langle\bar x^*,x-\bar x\rangle\leq 0$. Since this inequality holds for any $x\in {\rm dom}\mu$, we get $\bar x^*\in N(\bar x;{\rm dom}\mu)$. According to Proposition~\ref{prop4.2_An_Yen}, the latter means that $\bar x^*\in \partial^\infty\mu(\bar x)$. Thus, the inclusion $B^\infty\subset \partial^\infty\mu(\bar x)$ has been proved. The reverse inclusion holds by~\eqref{estimates_2}. So, the equality~\eqref{subdiff_equality2} is valid.
		$\hfill\Box$	
\end{proof}

The following theorem solves Questions~2--4 in the affirmative, provided that some standard requirements on $\varphi$, $G$, and $\bar x$ are fulfilled.

\begin{theorem} Under the assumptions of Theorem~\ref{new-thm}, both sets $B$ and $B^\infty$ are closed. Besides, the following assertions are valid:
\begin{itemize}
	\item[{\rm (a)}] If $\partial\mu (\bar x)=A$, then $B=A$.
	\item[{\rm (b)}] If $\partial^\infty\mu (\bar x)=A^\infty$, then $B^\infty=A^\infty$.
\end{itemize}	  
\end{theorem}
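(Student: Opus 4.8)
The plan is to prove that $\partial\mu(\bar x)=B$ and $\partial^\infty\mu(\bar x)=B^\infty$ already force $B$ and $B^\infty$ to be closed, and then derive (a) and (b) essentially for free.

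\textbf{Closedness of $B$ and $B^\infty$.} By Theorem~\ref{new-thm}, under the standing assumptions we have $\partial\mu(\bar x)=B$ and $\partial^\infty\mu(\bar x)=B^\infty$. Now $\mu(\bar x)\in\mathbb R$, so $\partial\mu(\bar x)$ is, by the very definition of the subdifferential, an intersection of weakly$^*$-closed half-spaces $\{x^*\mid\langle x^*,x-\bar x\rangle\le\mu(x)-\mu(\bar x)\}$ over $x\in X$; hence it is weakly$^*$-closed. Likewise, by Proposition~\ref{prop4.2_An_Yen}, $\partial^\infty\mu(\bar x)=N(\bar x;\dom\mu)$, which is a weakly$^*$-closed convex cone. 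Therefore $B=\partial\mu(\bar x)$ and $B^\infty=\partial^\infty\mu(\bar x)$ are both weakly$^*$-closed.

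\textbf{Assertions (a) and (b).} Suppose $\partial\mu(\bar x)=A$. Recall $A=\overline{A_0}$ (the weak$^*$ closure of $A_0$) by~\eqref{A}, and $B\supset A$ always holds by~\eqref{estimates_1}. Combining the hypothesis $\partial\mu(\bar x)=A$ with the equality $\partial\mu(\bar x)=B$ from Theorem~\ref{new-thm} immediately gives $A=B$. The argument for (b) is identical: if $\partial^\infty\mu(\bar x)=A^\infty$, then since $\partial^\infty\mu(\bar x)=B^\infty$ by Theorem~\ref{new-thm}, we get $A^\infty=B^\infty$.

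\textbf{Where the content really lies.} The only nontrivial ingredient is Theorem~\ref{new-thm}, which has already been established; once one has the exact formulas $\partial\mu(\bar x)=B$ and $\partial^\infty\mu(\bar x)=B^\infty$, both the closedness claim and (a)--(b) are short deductions from the general fact that subdifferentials and singular subdifferentials of (extended-real-valued) convex functions at points of their effective domain are weakly$^*$-closed. Thus I anticipate no real obstacle; the main point to be careful about is simply citing the correct earlier results (the definition of $\partial\mu$, Proposition~\ref{prop4.2_An_Yen}, Theorem~\ref{new-thm}, and the inclusions~\eqref{estimates_1}--\eqref{estimates_2}) and noting that $A$, $A^\infty$ are by construction already closed, so the hypotheses in (a) and (b) are internally consistent.
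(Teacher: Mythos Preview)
Your proof is correct and follows essentially the same route as the paper: deduce the closedness of $B$ and $B^\infty$ from the identities $\partial\mu(\bar x)=B$, $\partial^\infty\mu(\bar x)=B^\infty$ of Theorem~\ref{new-thm} together with the weak$^*$-closedness of (singular) subdifferentials, and then obtain (a) and (b) by combining the hypothesis with those same identities. The only difference is that you spell out why $\partial\mu(\bar x)$ and $\partial^\infty\mu(\bar x)$ are weakly$^*$-closed, whereas the paper simply cites this fact.
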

\begin{proof} The first claim follows from~\eqref{subdiff_equality1},~\eqref{subdiff_equality2}, and the closedness of the subdifferentials $\partial\mu (\bar x)$ and $\partial^\infty\mu (\bar x)$.
	
	The assertion~(a) is implied by~\eqref{estimates_1} and~\eqref{subdiff_equality1}, while assertion~(b) is immediate from~\eqref{estimates_2} and~\eqref{subdiff_equality2}.
$\hfill\Box$	
\end{proof}

\section{Differential Stability under a Polyhedral Convexity Assumption}\label{sect_5}

It is of interest to know if $\varphi$ is polyhedral convex and $G$ is generalized polyhedral convex, or $\varphi$ is generalized polyhedral convex and $G$ is polyhedral convex, then how the estimates~\eqref{subdiff_outer}--\eqref{sing_inner} look like. The main tool for proving the following two theorems is the sum rule without the closure sign in Theorem~\ref{thm_sum_rule_2}.

\begin{theorem}\label{new-thm_1} Consider the parametric optimization problem ~\eqref{math_program} and suppose that at least one of the following conditions is satisfied:
	\begin{itemize}
		\item [{\rm (i)}] The function $\varphi$ is proper polyhedral convex and the set-valued map $G$ is generalized polyhedral convex;
		\item [{\rm (ii)}] The function $\varphi$ is proper generalized polyhedral convex and the set-valued map $G: X \rightrightarrows Y$ is  polyhedral convex.
	\end{itemize}
	 Then, for any $\bar x\in X$ with $\mu(\bar x)\in\mathbb R$ and for any $\bar y \in M(\bar x)$, one has
	\begin{equation}\label{subdiff_equality1a}
		\partial\mu(\bar x) = A_0=A=B_0=B,
	\end{equation}
	 where $A$, $B$, $A_0$, and $B_0$ are defined respectively in~\eqref{A},~\eqref{B},~\eqref{A_0} and~\eqref{B_0}.
\end{theorem}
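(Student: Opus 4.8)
The strategy is to reduce everything to the exact sum rule in Theorem~\ref{thm_sum_rule_2}, which is available precisely because one of the two indicator functions involved will be polyhedral convex rather than merely generalized polyhedral convex. First I would observe that by Theorem~\ref{new-thm} we already have $\partial\mu(\bar x)=B$ and, by the chain $A_0\subset A\subset\partial\mu(\bar x)$ from~\eqref{estimates_1} together with $B_0=A_0$ from Proposition~\ref{prop_equalities}, it suffices to prove the single reverse inclusion $B\subset A_0$; all five sets in~\eqref{subdiff_equality1a} are then squeezed together. So the real content is: under (i) or (ii), the closure sign in the definition~\eqref{B} of $B$ can be removed and $B$ actually equals $B_0$.

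Next I would make the key structural remark that the sum appearing inside the closure in~\eqref{B}, namely $\partial\varphi(\bar x,\bar y)+N((\bar x,\bar y);\gph G)=\partial\varphi(\bar x,\bar y)+\partial\delta(\cdot;\gph G)(\bar x,\bar y)$, is the subdifferential sum for the two functions $\varphi$ and $\delta(\cdot;\gph G)$. In case (i), $\varphi$ is proper polyhedral convex and $\delta(\cdot;\gph G)$ is proper generalized polyhedral convex (since $\gph G$ is a generalized polyhedral convex set); in case (ii), $\delta(\cdot;\gph G)$ is proper polyhedral convex (since $\gph G$ is polyhedral convex) and $\varphi$ is proper generalized polyhedral convex. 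In either case, since $(\bar x,\bar y)\in\dom\varphi\cap\gph G$, Theorem~\ref{thm_sum_rule_2} applies and gives, without any closure,
\begin{equation*}
	\partial\big(\varphi+\delta(\cdot;\gph G)\big)(\bar x,\bar y)=\partial\varphi(\bar x,\bar y)+N\big((\bar x,\bar y);\gph G\big).
\end{equation*}
Hence the set under the closure bar in~\eqref{B} is already closed (being a subdifferential of a convex function, it is weak$^*$-closed), so $\overline{\partial\varphi(\bar x,\bar y)+N((\bar x,\bar y);\gph G)}=\partial\varphi(\bar x,\bar y)+N((\bar x,\bar y);\gph G)$ and therefore $B=\mathrm{pr}_{X^*}\big[(\partial\varphi(\bar x,\bar y)+N((\bar x,\bar y);\gph G))\cap(X^*\times\{0\})\big]$, which is exactly $B_0$ as defined in~\eqref{B_0}.

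Combining the two steps: $B=B_0=A_0\subset A\subset\partial\mu(\bar x)=B$, so all inclusions are equalities and~\eqref{subdiff_equality1a} follows. I expect the only genuinely delicate point to be the bookkeeping that justifies removing the closure bar, i.e. checking carefully that the hypotheses of Theorem~\ref{thm_sum_rule_2} are met under each of (i) and (ii) — in particular that generalized polyhedral convexity of $\gph G$ (resp.\ of $\dom\varphi$, via $\epi\varphi$) transfers to the indicator function, and that polyhedral convexity is correctly assigned to whichever of the two functions carries it, so that the roles of $f_1$ (polyhedral) and $f_2$ (generalized polyhedral) in Theorem~\ref{thm_sum_rule_2} are filled legitimately in both cases. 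Everything else is the elementary squeezing argument above and the projection identity already used in the proof of Proposition~\ref{prop_equalities}.
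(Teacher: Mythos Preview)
Your proposal is correct and uses the same key ingredient as the paper, namely the exact sum rule of Theorem~\ref{thm_sum_rule_2} together with the squeezing chain and Proposition~\ref{prop_equalities}. The only difference is in packaging: the paper takes $\bar x^*\in\partial\mu(\bar x)$, shows directly that $(\bar x^*,0)\in\partial\big(\varphi+\delta(\cdot;\gph G)\big)(\bar x,\bar y)$, applies Theorem~\ref{thm_sum_rule_2}, and concludes $\bar x^*\in A_0$, thereby proving $\partial\mu(\bar x)\subset A_0$; you instead invoke Theorem~\ref{new-thm} for $\partial\mu(\bar x)=B$ up front and then observe that Theorem~\ref{thm_sum_rule_2} identifies the sum $\partial\varphi(\bar x,\bar y)+N((\bar x,\bar y);\gph G)$ with a subdifferential, hence weak$^*$-closed, so $B=B_0$. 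Your route is marginally more economical since it avoids re-deriving the inclusion $(\bar x^*,0)\in\partial\big(\varphi+\delta(\cdot;\gph G)\big)(\bar x,\bar y)$, which is effectively already contained in the proof of Theorem~\ref{new-thm}; the paper's route is slightly more self-contained. Mathematically the two arguments are equivalent.
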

\begin{proof} By our assumptions, at least one of the conditions~(i) and~(ii) is satisfied, $\bar x\in X$ is such that $\mu(\bar x)$ is a finite real number, and $\bar y \in M(\bar x)$ is given arbitrarily. We will prove that the relation~\eqref{subdiff_equality1a} is valid.
	
	First, let us show that
	\begin{equation}\label{subdiff_incl1}
		\partial\mu(\bar x)\subset A_0.
	\end{equation}
Since $\bar y \in M(\bar x)$, from~\eqref{solution_map} it follows that $\mu(\bar x)= \varphi (\bar x,\bar y)$. In addition, the inclusion~\eqref{nonempty_intersection} holds. 

Fixing any $\bar x^* \in \partial \mu(\bar x)$, by the definition of subdifferential we have 
	$$\mu(x) -\mu(\bar x) \geq \langle \bar x^*, x-\bar x \rangle$$ for every $x\in X.$
Hence, for each pair $(x,y) \in {\rm gph}\, G$, one gets
	\begin{align*}
		\varphi(x,y) - \varphi(\bar x, \bar y)= \varphi(x,y)-\mu(\bar x) & \geq \mu(x) -\mu(\bar x)\\
		& \geq \langle \bar x^*, x -\bar x \rangle + \langle 0, y -\bar y \rangle.
	\end{align*}
	Clearly, this yields
	\begin{equation*}\
		\big(\varphi+\delta(\cdot;{\rm gph}\,G)\big)(x,y)- \big( \varphi+\delta(\cdot;{\rm gph}\,G)\big)(\bar x,\bar y)\\
		\geq \langle (\bar x^*,0),(x,y)-(\bar x, \bar y)\rangle
	\end{equation*} for all $(x,y)\in X \times Y$. Hence, 
	$(\bar x^*, 0) \in \partial\big(\varphi + \delta(\cdot;{\rm gph}\,G)\big) (\bar x,\bar y).$ 
	
	If the condition~(i) is satisfied then, by choosing $f_1=\varphi$ and $f_2=\delta(\cdot;{\rm gph}\,G)\big)$, from~\eqref{nonempty_intersection} and the sum rule in Theorem~\ref{thm_sum_rule_2} we can deduce that 
	\begin{equation}\label{sum_rule_a} \partial\big(\varphi + \delta(\cdot;{\rm gph}\,G)\big) (\bar x,\bar y)=\partial  \varphi(\bar x, \bar y)+ \partial\delta(\cdot; {\rm {gph}}\,G)(\bar x, \bar y).
\end{equation} Then we have
\begin{equation}\label{inclusion_new1}
		(\bar x^*,0) \in\partial  \varphi(\bar x, \bar y)+ \partial\delta(\cdot; {\rm {gph}}\,G)(\bar x, \bar y).
	\end{equation} By~\eqref{inclusion_new1}, there exists $(x^*,y^*)\in \partial  \varphi(\bar x, \bar y)$ such that 
	$$(\bar x^*-x^*,-y^*) \in \partial\delta(\cdot; {\rm {gph}}\,G)(\bar x, \bar y)=N((\bar x, \bar y);{\rm {gph}}\,G).$$ So, by the definition of coderivative we get the inclusion $\bar x^*-x^*\in D^*G( \bar x, \bar y)(y^*)$, which implies that $\bar x^*\in x^*+ D^*G( \bar x, \bar y)(y^*)$. Thus, remembering that~$A_0$ is defined by~\eqref{A_0} and $\bar x^* \in \partial \mu(\bar x)$ can be chosen arbitrarily, we obtain the inclusion~\eqref{subdiff_incl1}.
	
	If the condition~(ii) is fulfilled then, thanks to~\eqref{nonempty_intersection}, the sum rule in Theorem~\ref{thm_sum_rule_2} works for $f_1:=\delta(\cdot;{\rm gph}\,G)\big) $ and $f_2:=\varphi$. So, we have~\eqref{sum_rule_a}. Hence, repeating the above arguments, we get~\eqref{subdiff_incl1}.
	
    From~\eqref{A},~\eqref{B},~\eqref{A_0} and~\eqref{B_0} we have $A_0\subset A$ and $B_0\subset B$. Thus, by~\eqref{subdiff_incl1},~\eqref{subdiff_equality1}, and Theorem~\ref{thm_inclusions} one gets
  	\begin{equation}\label{two_icls} B=\partial\mu(\bar x)\subset A_0\subset A\subset B.\end{equation}
	In addition, Proposition~\ref{prop_equalities} tells us that $A_0=B_0$. Combining this with~\eqref{two_icls} yields~\eqref{subdiff_equality1a}.
	
	The proof is complete. $\hfill\Box$	
	\end{proof}
	
	\begin{theorem} Under the assumptions of Theorem~\ref{new-thm_1}, for any $\bar x\in X$ with $\mu(\bar x)\in\mathbb R$ and for any $\bar y \in M(\bar x)$, one has
		\begin{equation}\label{subdiff_equality2a}
			\partial^\infty\mu(\bar x) = A_0^\infty=A^\infty=B_0^\infty=B^\infty,
		\end{equation} where  $A^\infty, B^\infty, A_0^\infty$ and $B_0^\infty$ are given respectively by~\eqref{A-infty},~\eqref{B-infty}, ~\eqref{A-infty_0} and~\eqref{B-infty_0}.
	\end{theorem}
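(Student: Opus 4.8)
\begin{proof}
The plan is to run the argument of Theorem~\ref{new-thm_1}, with the objective function $\varphi$ replaced throughout by the indicator $\delta(\cdot;{\rm dom}\,\varphi)$ and with Proposition~\ref{prop4.2_An_Yen} used to pass freely between singular subdifferentials and normal cones to effective domains. Concretely, the goal is to prove the single new inclusion $\partial^\infty\mu(\bar x)\subset A_0^\infty$, after which the chain $B^\infty=\partial^\infty\mu(\bar x)\subset A_0^\infty\subset A^\infty\subset B^\infty$ closes the loop exactly as in~\eqref{two_icls}: the first equality is~\eqref{subdiff_equality2} from Theorem~\ref{new-thm}, the inclusions $A_0^\infty\subset A^\infty$ and $B_0^\infty\subset B^\infty$ are immediate from~\eqref{A-infty},~\eqref{B-infty},~\eqref{A-infty_0},~\eqref{B-infty_0}, the inclusion $A^\infty\subset B^\infty$ is Theorem~\ref{thm_inclusions}, and $A_0^\infty=B_0^\infty$ is Proposition~\ref{prop_equalities}; together these give~\eqref{subdiff_equality2a}.

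To prove $\partial^\infty\mu(\bar x)\subset A_0^\infty$, fix $\bar x^*\in\partial^\infty\mu(\bar x)$, which by Proposition~\ref{prop4.2_An_Yen} means $\bar x^*\in N(\bar x;{\rm dom}\,\mu)$, i.e. $\langle\bar x^*,x-\bar x\rangle\le 0$ for every $x\in{\rm dom}\,\mu$. Since ${\rm dom}\,\mu$ is the $X$-projection of $({\rm dom}\,\varphi)\cap(\gph G)$ — indeed any $(x,y)$ in this intersection satisfies $\mu(x)\le\varphi(x,y)<+\infty$ — it follows that $\langle(\bar x^*,0),(x,y)-(\bar x,\bar y)\rangle\le 0$ for all $(x,y)\in({\rm dom}\,\varphi)\cap(\gph G)$, that is, $(\bar x^*,0)\in\partial\big(\delta(\cdot;{\rm dom}\,\varphi)+\delta(\cdot;\gph G)\big)(\bar x,\bar y)$. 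Now, by Theorem~\ref{thm_gpcf} the polyhedral convexity of $\varphi$ forces ${\rm dom}\,\varphi$ to be a polyhedral convex set, so under either condition (i) or condition (ii) exactly one of $\delta(\cdot;{\rm dom}\,\varphi)$ and $\delta(\cdot;\gph G)$ is proper polyhedral convex while the other is proper generalized polyhedral convex. Hence the closure-free sum rule of Theorem~\ref{thm_sum_rule_2} applies at the point $(\bar x,\bar y)\in({\rm dom}\,\varphi)\cap(\gph G)$ (the inclusion~\eqref{nonempty_intersection}), giving
\begin{equation*}
(\bar x^*,0)\in\partial\delta((\bar x,\bar y);{\rm dom}\,\varphi)+\partial\delta(\cdot;\gph G)(\bar x,\bar y)=N((\bar x,\bar y);{\rm dom}\,\varphi)+N((\bar x,\bar y);\gph G).
\end{equation*}
Rewriting $N((\bar x,\bar y);{\rm dom}\,\varphi)=\partial^\infty\varphi(\bar x,\bar y)$ by Proposition~\ref{prop4.2_An_Yen} and splitting this sum, there is $(x^*,y^*)\in\partial^\infty\varphi(\bar x,\bar y)$ with $(\bar x^*-x^*,-y^*)\in N((\bar x,\bar y);\gph G)$; the definition of the coderivative then yields $\bar x^*-x^*\in D^*G(\bar x,\bar y)(y^*)$, whence $\bar x^*\in x^*+D^*G(\bar x,\bar y)(y^*)\subset A_0^\infty$ by~\eqref{A-infty_0}. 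As $\bar x^*$ was arbitrary, $\partial^\infty\mu(\bar x)\subset A_0^\infty$, and the proof is complete by the first paragraph. $\hfill\Box$
\end{proof}

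The step I expect to be the crux is the reduction in the second paragraph: one must argue carefully that $\bar x^*\in N(\bar x;{\rm dom}\,\mu)$ is equivalent to $(\bar x^*,0)$ annihilating $({\rm dom}\,\varphi)\cap(\gph G)$ at $(\bar x,\bar y)$ — which rests on the identity ${\rm dom}\,\mu={\rm pr}_X\big(({\rm dom}\,\varphi)\cap(\gph G)\big)$ — and that the indicator functions of ${\rm dom}\,\varphi$ and $\gph G$ inherit, respectively, polyhedral and generalized polyhedral convexity so that Theorem~\ref{thm_sum_rule_2} is genuinely applicable without a closure; once these two points are in place, the remaining manipulations are parallel to the subdifferential case treated in Theorem~\ref{new-thm_1}.
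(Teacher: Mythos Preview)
Your proof is correct and follows essentially the same route as the paper: both arguments reduce $\bar x^*\in\partial^\infty\mu(\bar x)$ via Proposition~\ref{prop4.2_An_Yen} to $(\bar x^*,0)\in\partial\big(\delta(\cdot;{\rm dom}\,\varphi)+\delta(\cdot;\gph G)\big)(\bar x,\bar y)$, then invoke Theorem~\ref{thm_sum_rule_2} (using that Theorem~\ref{thm_gpcf} transfers the polyhedral/generalized polyhedral structure of $\varphi$ to ${\rm dom}\,\varphi$) to split the sum without a closure, and finally close the chain $B^\infty=\partial^\infty\mu(\bar x)\subset A_0^\infty\subset A^\infty\subset B^\infty$ together with $A_0^\infty=B_0^\infty$. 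The only cosmetic differences are that the paper uses merely the one-sided inclusion ${\rm pr}_X\big(({\rm dom}\,\varphi)\cap(\gph G)\big)\subset{\rm dom}\,\mu$ (which is all that is needed) rather than the full equality you state, and your phrase ``exactly one of the two indicators is polyhedral convex'' should read ``at least one'' --- neither point affects the validity of your argument.
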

	\begin{proof} By the assumptions made, $\bar x\in X$ with $\mu(\bar x)\in\mathbb R$, $\bar y \in M(\bar x)$, and at least one of the conditions~(i) and~(ii) is satisfied. 
		
	Pick a vector $\bar x^* \in \partial^\infty\mu(\bar x)$ and get by Proposition~\ref{prop4.2_An_Yen} that 
		$$\bar x^* \in N(\bar x;{\rm{dom}}\,\mu)=\partial\delta(.;{\rm{dom}}\, \mu)(\bar x).$$
		So, $\langle \bar x^*, x -\bar x \rangle\leq 0$ for every $x\in {\rm{dom}}\, \mu$. Clearly, 		
		if $(x,y) \in \big({\rm{dom}}\, \varphi\big)\cap \big({\rm gph}\, G\big)$, then $x\in {\rm{dom}}\, \mu$. So, we have
		\begin{equation*}
			\begin{split}
			&  \Big(\delta(.;{\rm{dom}}\, \varphi)+\delta(\cdot;{\rm gph}\,G)\Big)(x,y)- \Big(\delta(.;{\rm{dom}}\, \varphi)+\delta(\cdot;{\rm gph}\,G)\Big)(\bar x,\bar y)\\
				& =  0\\
				& \geq 
				\langle (\bar x^*,0),(x,y)-(\bar x, \bar y)\rangle.
			\end{split}
		\end{equation*} If $(x,y) \notin \big({\rm{dom}}\, \varphi\big)\cap \big({\rm gph}\, G\big)$, then $\Big(\delta(.;{\rm{dom}}\, \varphi)+\delta(\cdot;{\rm gph}\,G)\Big)(x,y)=+\infty$. Therefore, 
		\begin{align*} \begin{split}
		&  \Big(\delta(.;{\rm{dom}}\, \varphi)+\delta(\cdot;{\rm gph}\,G)\Big)(x,y)- \Big(\delta(.;{\rm{dom}}\, \varphi)+\delta(\cdot;{\rm gph}\,G)\Big)(\bar x,\bar y)\\
		& \geq  \langle (\bar x^*,0),(x,y)-(\bar x, \bar y)\rangle.
	\end{split}
		\end{align*}
		Thus, the last inequality holds for any $(x,y)\in X \times Y$. It follows that
		\begin{equation}\label{incl-1new} (\bar x^*, 0) \in \partial\big(\delta(.;{\rm{dom}}\, \varphi) + \delta(\cdot;{\rm gph}\,G)\big) (\bar x,\bar y).
		\end{equation} 
		
		If the situation~(i) occurs, then by~\cite[Theorem~3.2]{LuanYaoYen_2018} we can assert that ${\rm{dom}}\, \varphi$ is a nonempty polyhedral convex set; hence $\delta(.;{\rm{dom}}\, \varphi)$ is  a proper polyhedral convex function. In addition, as the set-valued map $G$ is generalized polyhedral convex,  by~\cite[Theorem~3.2]{LuanYaoYen_2018} we know that $\delta(\cdot;{\rm gph}\,G)$ is a generalized polyhedral convex function. From~\eqref{nonempty_intersection} it follows that $$\big({\rm dom}\,\delta(.;{\rm{dom}}\, \varphi)\big)\cap \big({\rm dom}\, \delta(\cdot;{\rm gph}\,G)\big)\neq\emptyset.$$ Therefore, setting 
		$f_1=\delta(.;{\rm{dom}}\, \varphi)$, $f_2=\delta(\cdot;{\rm gph}\,G)\big)$, and applying Theorem~\ref{thm_sum_rule_2} we get
		\begin{equation}\label{sum_rule_1new} \partial\big(\delta(.;{\rm{dom}}\, \varphi) \!+\! \delta(\cdot;{\rm gph}\,G)\big) (\bar x,\bar y)\!=\!\partial  \delta(.;{\rm{dom}}\, \varphi)(\bar x, \bar y)+ \partial\delta(\cdot; {\rm {gph}}\,G)(\bar x, \bar y).
		\end{equation} Consequently, by~\eqref{incl-1new} and Proposition~\ref{prop4.2_An_Yen} we have
		\begin{equation}\label{incl_2new}\begin{array}{rcl}
			(\bar x^*,0) & \in & \partial  \delta(.;{\rm{dom}}\, \varphi)(\bar x, \bar y)+ \partial\delta(\cdot; {\rm {gph}}\,G)(\bar x, \bar y)\\
			& = & \partial^\infty  \varphi(\bar x, \bar y)+N((\bar x, \bar y);{\rm {gph}}\,G).
			\end{array}
		\end{equation} By~\eqref{incl_2new}, we can find $(x^*,y^*)\in \partial^\infty  \varphi(\bar x, \bar y)$ such that 
		$$(\bar x^*-x^*,-y^*)\in N((\bar x, \bar y);{\rm {gph}}\,G).$$ This gives $\bar x^*-x^*\in D^*G( \bar x, \bar y)(y^*)$, which implies that  $\bar x^*\in x^*+ D^*G( \bar x, \bar y)(y^*)$. As~$A^\infty_0$ is defined by~\eqref{A-infty_0} and $\bar x^* \in \partial^\infty \mu(\bar x)$ was taken arbitrarily, we obtain the inclusion
		\begin{equation}\label{subdiff-infty_1new}
		\partial^\infty\mu(\bar x) \subset A_0^\infty.
	  \end{equation}
		
		If the situation~(ii) occurs, then using~\eqref{nonempty_intersection} and the sum rule in Theorem~\ref{thm_sum_rule_2} for $f_1:=\delta(\cdot;{\rm gph}\,G)\big) $ and $f_2:=\delta(.;{\rm{dom}}\, \varphi)$ yields~\eqref{sum_rule_1new}. This gives~\eqref{incl_2new}. Then, by the above arguments we can obtain~\eqref{subdiff-infty_1new}.
		
		Clearly,~\eqref{A-infty},~\eqref{B-infty},~\eqref{A-infty_0} and~\eqref{B-infty_0} imply that $A^\infty_0\subset A^\infty$ and $B^\infty_0\subset B^\infty$. So, from~\eqref{subdiff_equality2},~\eqref{subdiff-infty_1new}, and Theorem~\ref{thm_inclusions} it follows that 
		\begin{equation}\label{two_icls_new} B^\infty=\partial^\infty\mu(\bar x)\subset A^\infty_0\subset A^\infty\subset B^\infty.\end{equation}
		Since the equality $A^\infty_0=B^\infty_0$ is valid by Proposition~\ref{prop_equalities}, we get~\eqref{subdiff_equality2a} from~\eqref{two_icls_new}.
		
		The proof is complete.
		$\hfill\Box$	
		\end{proof}
		
\section{Conclusions}\label{sect_6}
		
		New results on differential stability of infinite-dimensional parametric optimization problems, which are described by proper generalized polyhedral convex functions and generalized polyhedral convex set-valued maps, were obtained. Among other things, relationships between the upper estimates and lower estimates for the subdifferential and the singular subdifferential of the optimal value function given in~\cite{AnLuanYen_2023} were established. We also proved that the lower estimates can be strict, but each upper estimate is an equality. Thus, Question~3 from~\cite{AnLuanYen_2023} has been solved. Besides, we showed that if either the objective function or the constraint set-valued mapping is polyhedral convex, then each subdifferential upper estimate in that paper coincides with the corresponding lower estimate.
		
		Concerning differential stability in convex optimization via generalized polyhedrality, note that Question~2 in~\cite{AnLuanYen_2023} remains open. Its solution may need certain refined sum rules, which are to be found.

\begin{acknowledgements}
This research was supported by the project NCXS02.01/24-25 of the Vietnam Academy of Science and Technology. Duong Thi Viet An and Nguyen Ngoc Luan would like to thank Thai Nguyen University of Sciences and Hanoi National University of Education for creating favorable working conditions. The hospitality of the Vietnam Institute for Advanced Study in Mathematics for our research group in a recent stay is gratefully acknowledged.
\end{acknowledgements}

\section*{Declarations}
\begin{itemize}
	\item[] Conflict of interest/Competing interests: The authors have not disclosed any competing interests.
\end{itemize}

\end{document}